\theoremstyle{plain}
\newtheorem{thm}{Theorem}[section]
\newtheorem{lem}{Lemma}[section]
\newtheorem{prop}{Proposition}[section]
\newtheorem{conj}{Conjecture}[section]
\newtheorem{thma}{Theorem}
\theoremstyle{proof}
\numberwithin{equation}{section}
\newcommand \QQ {{\mathbb Q}}
\newcommand \ZZ {{\mathbb Z}}
\begin{document} 
\title[simultaneous $3$-divisibility of class numbers ]{On the simultaneous $3$-divisibility of class numbers of quadruples of real quadratic fields}
\author{Kalyan Banerjee, Ankurjyoti Chutia and Azizul Hoque}
\address{KB @Department of Mathematics, SRM University AP, Mangalagiri-Mandal, Amaravati-522240, Andhra Pradesh, India}
\email{kalyan.b@srmap.edu.in}
\address{AC @Department of Mathematics, Faculty of Science, Gauhati University, Guwahati-781014, Assam, India}
\email{ankurjyoti878@gmail.com}
\address{AH @Department of Mathematics, Faculty of Science, Rangapara College, Rangapara, Sonitpur-784505, Assam, India}
\email{ahoque.ms@gmail.com}

\subjclass[2020]{11R11, 11R29, 11G05}

\date{\today}

\keywords{Real quadratic field, Class number, Spiegelungssatz, Iizuka's conjecture, Hilbert class field}

\begin{abstract}
In this paper, we construct infinitely many quadruples of real quadratic fields whose class numbers are all divisible by $3$. To the best of our knowledge, this is the first result towards the divisibility of the class numbers of certain tuples of real quadratic fields. At the end, we give an application of this result to produce some elliptic curves having a $3$-torsion subgroup. 
\end{abstract} 
\maketitle{}
\section{Introduction}
It was conjectured by Gauss that there are infinitely many real quadratic fields with class number one, which is still open. In contrast, the situation for imaginary quadratic fields is completely understood: only nine such fields have class number one, and all imaginary quadratic fields with class numbers up to $100$ have been classified (cf. \cite{WA2004}). These results highlight the importance of understanding the arithmetic of class numbers. In particular, their divisibility properties shed light on the structure of the associated ideal class groups. A striking development in this direction is the proof that, for any positive integer $n$, there exist infinitely many real (resp. imaginary) quadratic fields whose class numbers are divisible by $n$ (see, \cite{AC1955, CHYP18, H2022, YA1970}). Moreover,  there is a “Spiegelungssatz” due to Scholz \cite{SC32}, which relates the ideal class group of a real quadratic field to that of an imaginary quadratic field.  As a consequence of this Spiegelungssatz, we can deduce that if $3$ divides the class number of a real quadratic field $\QQ(\sqrt{d})$, then $3$ also divides the class number of the imaginary quadratic field $\QQ(\sqrt{-3d})$. Komatsu \cite{KO02} was motivated by this consequence, who proved the existence of an infinite family of pairs of quadratic fields of the form $\QQ(\sqrt{d})$ and $\QQ(\sqrt{md})$ with $m, d\in \mathbb{Z}$ whose class numbers are divisible by $3$. He further extended this result in \cite{KO17} to the $n$-divisibility of the class numbers of pairs of imaginary quadratic fields of the above form. Later, Iizuka \cite{IZ18} considered an analogous problem, and proved the existence of infinitely many pairs of  imaginary quadratic fields of the form $\mathbb{Q}(\sqrt{d})$  and $\mathbb{Q}(\sqrt{d+1})$ with $d\in\mathbb{Z}$ whose class numbers are all divisible by $3$.  This helped him to frame the following conjecture. 

\begin{conj}[{\cite[126 p.]{IZ18}, Conjecture}]\label{conjIZ}
For any prime number $p$ and any positive integer $m$, there is an infinite family of $m + 1$ successive real (or imaginary) quadratic fields, 
$$\mathbb{Q}(\sqrt{d}), \mathbb{Q}(\sqrt{d+1}), \cdots, \mathbb{Q}(\sqrt{d+m})$$
with $d\in \mathbb{Z}$ whose class numbers are all divisible by $p$.
\end{conj}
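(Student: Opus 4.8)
The plan is to reduce the $p$-divisibility of each individual class number to the existence of an unramified cyclic extension of degree $p$, and then to realise all $m+1$ such extensions simultaneously over a single polynomial family. By class field theory, $p \mid h(\QQ(\sqrt{d+i}))$ as soon as $\QQ(\sqrt{d+i})$ admits an unramified $\ZZ/p\ZZ$-extension, and in practice such an extension is produced from a Diophantine witness, namely a representation of the shape
$$X_i^2 - (d+i)\,Y_i^2 = Z_i^{\,p}$$
subject to coprimality together with the requirement that the ideal generated by $X_i + Y_i\sqrt{d+i}$ is \emph{not} a perfect $p$-th power of a principal ideal. The central idea is therefore to search for a polynomial $D(t)\in\ZZ[t]$ together with auxiliary polynomials $X_i(t),Y_i(t),Z_i(t)$ satisfying the $m+1$ simultaneous identities
$$X_i(t)^2-\bigl(D(t)+i\bigr)\,Y_i(t)^2=Z_i(t)^{\,p},\qquad i=0,1,\dots,m,$$
as identities in $\ZZ[t]$, so that every integer value of $t$ furnishes one tuple of consecutive fields carrying the desired divisibility.

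I would attack this by fixing $p$ and building the family inductively in $m$. For $p=3$ the single-field identity is classical and, through Scholz's Spiegelungssatz quoted above, is tightly linked to cubic points on an associated elliptic curve; one can then hope to align the shifts $d,d+1,\dots,d+m$ by choosing $D(t)$ so that the several cubic conditions collapse onto rational points of a single higher-dimensional variety that still carries a positive-dimensional family. Once the polynomial identities are in hand, I would confirm that infinitely many $t$ yield genuinely distinct fields by controlling the squarefree parts of $D(t)+i$ via a squarefree-values sieve, and I would separate the real from the imaginary case by the sign of the leading coefficient of $D(t)$.

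The remaining and delicate verification is that the constructed ideal classes really have order divisible by $p$ rather than being principal. In the imaginary case a Minkowski–Nagell size comparison between $Z_i(t)$ and the Minkowski bound forbids small principal generators. In the real case the fundamental unit can absorb the putative relation, so one instead needs a lower bound on the regulator, or an explicit obstruction via the ambiguous-class and genus-theory count, to guarantee nontriviality uniformly for all large $t$.

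The decisive difficulty, and the reason the conjecture remains open, is the simultaneous solvability of the $m+1$ norm equations by one polynomial family. Each additional consecutive field imposes a fresh polynomial identity, and for large $m$ these cut out varieties whose rational points are governed by Lang-type finiteness, so a positive-dimensional family need not exist; for general $p$ one moreover loses the low-degree cubic identities that make $p=3$ tractable. I expect this simultaneous construction, together with the uniform non-principality in the real case, to be the genuine bottleneck, which is precisely why one can at present establish only the case $p=3$, $m=3$ of quadruples rather than the conjecture in full.
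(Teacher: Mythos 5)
The statement you were asked about is not a theorem of the paper at all: it is Conjecture \ref{conjIZ} (Iizuka's conjecture), which the paper cites purely as motivation and which remains open. The paper proves only a result \emph{in the spirit of} the conjecture, namely Theorem \ref{thm}: infinitely many quadruples of real quadratic fields with class numbers divisible by $3$, where the radicands are specific cubic polynomials in $D$ rather than the successive integers $d, d+1, \dots, d+m$. So there is no ``paper's own proof'' to match, and any purported proof of the statement should be treated with maximal suspicion.

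Your proposal, read as a proof, has a genuine and fatal gap --- and to your credit you concede it yourself in the last paragraph. Every decisive step is left as an unestablished desideratum: the existence of the $m+1$ simultaneous polynomial identities $X_i(t)^2-\bigl(D(t)+i\bigr)Y_i(t)^2=Z_i(t)^p$ in $\ZZ[t]$ is precisely the open problem, not a reduction of it; the uniform non-principality in the real case (where the fundamental unit can absorb the relation) is flagged but not resolved, and no regulator bound is supplied; and the squarefree-values sieve you invoke to separate the fields is only available unconditionally for polynomials of degree at most $3$, which constrains $D(t)$ in a way you do not address. A further factual slip: the paper does \emph{not} establish ``the case $p=3$, $m=3$'' of the conjecture, because its quadruples are not successive fields $d, d+1, d+2, d+3$; the known cases of the conjecture proper are $m=1$ for every prime $p$ in the imaginary case (Krishnamoorthy--Pasupulati \cite{KP21}, after Iizuka \cite{IZ18} for $p=3$) and triples for $p=3$, again imaginary (Chattopadhyay--Muthukrishnan \cite{CM21}); for real quadratic fields even consecutive pairs appear to be open, the present paper being, as its authors note, the first divisibility result for tuples of real fields at all. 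Your single-field toolkit (norm-equation witnesses, Spiegelungssatz, class field theory) is the standard and correct starting point --- it is essentially the Kishi--Miyake and Kishi machinery the paper uses for its weaker theorem --- but nothing in the proposal bridges from one field to $m+1$ simultaneous consecutive ones, so the conjecture is exactly as open after your argument as before it.
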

In \cite{CM21}, Chattopadhyay and Muthukrishnan  extended the result of Iizuka \cite[Theorem 1]{IZ18} by proving the $3$-divisibility of the class numbers of an infinite family of triples of imaginary quadratic fields. Their proof is based on the construction of an unramified, cyclic cubic extension of a quadratic field and the classical Spiegelungssatz of Scholz. Krishnamoorthy and Pasupulati \cite{KP21} further extended Iizuka's result \cite[Theorem 1]{IZ18} from $3$-divisibility to $p$-divisibility for any prime $p$. In particular, it resolved Conjecture \ref{conjIZ} when $m=1$. In  \cite{XC20}, Xie and Chao  proved that there are infinitely many pairs of imaginary quadratic fields of the form $\QQ(\sqrt{d})$ and $\QQ(\sqrt{d+m})$, whose class groups have an element of order $n$ respectively. They used Yamamoto’s \cite{YA1970} construction to prove this result. The third author \cite{AH2022} constructed an infinite family of quintuples of imaginary quadratic fields of the form  $\QQ(\sqrt{d}), \QQ(\sqrt{d+1}), \QQ(\sqrt{d+4}), \QQ(\sqrt{d+36})$ and $ \QQ(\sqrt{d+100})$ whose class numbers are all divisible by a given odd integer $n\geq 3$. This result helped the author to give a complete proof of Conjecture \ref{conjIZ} in a more general case, when $m = 1$. Analogously, it gives an affirmative answer to a weaker version of Conjecture \ref{conjIZ} for $m \geq 3$. Chakraborty and the third  author  \cite{CH2023} constructed an infinite family of certain tuples of imaginary quadratic fields of the form $\QQ(\sqrt{d}), \QQ(\sqrt{d+1}), \QQ(\sqrt{4d+1})$ and $\QQ(\sqrt{2d+4^m})$ with $d, m\in \ZZ$ and $1\leq m\leq 2|d|$ satisfying the $n$-divisibility of their class numbers for a given odd integer $n\geq 3$. In the spirit of Conjecture \ref{conjIZ}, the third author \cite{AH2024} considered the problem of $n$-divisibility of class numbers of the tuples of imaginary quadratic fields of the form, 
$$\left(\QQ(\sqrt{d}), \QQ(\sqrt{d+1}), \QQ(\sqrt{2(d+2)}), \QQ(\sqrt{3(d+3)}), \cdots, \QQ(\sqrt{m(d+m)})\right)$$ with $d\in \ZZ$ for a given integer $m\geq 1$. 
 In a similar spirit, we construct an infinite family of quadruples of real quadratic fields whose class numbers are all divisible by $3$. The precise result in this paper is the following:

\begin{thm}\label{thm} There are infinitely many quadruples of real quadratic fields of the form,

\begin{align*}& \big(\mathbb{Q}(\sqrt{D}), \mathbb{Q}(\sqrt{216000D^3+457200D^2+322580D+75866}), \mathbb{Q}(\sqrt{432D^3+1080D^2+900D+223}), \\
& \mathbb{Q}(\sqrt{40500D^3+89100D^2+65340D+16215}) \big)\end{align*} with $D\in \mathbb{N}$ whose class numbers are all divisible by $3$. 
\end{thm}
\section{Construction of cyclic, cubic and unramified extensions}
We recall the following beautiful result of Kishi and Miyake \cite{KM00} to construct an unramified, cyclic cubic extension of a given quadratic field.  
\begin{thma}[{\cite[Main Theorem]{KM00}}]\label{KMtheorem}
For any two integers $u$ and $v$, let
 \begin{equation}\label{eq2.1}
F_{u, v}(Z)=Z^3-uvZ-u^2.
\end{equation}
If 
\begin{enumerate}\label{ctn3.1}
\item[(a)] $u$ and $v$ are relatively prime;
\item[(b)] $F_{u, v}(Z)$ is irreducible over $\mathbb{Q}$;
\item[(c)] the discriminant $D_{F_{u, v}}$ of $F_{u, v}(Z)$ is not a perfect square in $\mathbb{Z}$;
\item[(d)] one of the following conditions holds:
\begin{enumerate}
\item[(d.1)] $3\nmid v,$
\item[(d.2)] $3\mid v,\hspace*{3mm} uv\not\equiv 3\pmod 9, \hspace*{3mm} u\equiv v\pm 1 \pmod 9 ,$
\item[(d.3)] $3\mid v, \hspace*{3mm} uv\equiv 3 \pmod 9, \hspace*{3mm} u\equiv v\pm 1 \pmod { 27} ,$
\end{enumerate}
\end{enumerate}
then the normal closure of $\mathbb{Q}(\alpha)$, where $\alpha$ is a root of $F_{u,v}(Z)$, is a cyclic, cubic, unramified extension of  $\mathbb{K}=\mathbb{Q}(\sqrt{D_{F_{u,v}}})$; in particular, $\mathbb{K}$ has class number divisible by $3$.
Conversely, every quadratic number field $\mathbb{K}$ with class number divisible by $3$ and every unramified, cyclic and cubic extension of $\mathbb{K}$ is given by suitable choices of integers $u$ and $v$.
\end{thma}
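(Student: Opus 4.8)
The plan is to realize the polynomial family $F_{u,v}$ inside the classical correspondence between non-Galois cubic fields and their quadratic resolvents, and then to deduce the unramifiedness of the associated cyclic cubic extension from a discriminant computation, with class field theory supplying the divisibility conclusion. First I would compute the discriminant of $F_{u,v}(Z)=Z^3-uvZ-u^2$. Applying $\mathrm{disc}(Z^3+aZ+b)=-4a^3-27b^2$ with $a=-uv$ and $b=-u^2$ gives
\[
D_{F_{u,v}}=4u^3v^3-27u^4=u^3(4v^3-27u),
\]
so that $\mathbb{K}=\mathbb{Q}(\sqrt{D_{F_{u,v}}})=\mathbb{Q}(\sqrt{u(4v^3-27u)})$. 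Under hypotheses (b) and (c), $F_{u,v}$ is irreducible with non-square discriminant, hence $\mathrm{Gal}(L/\mathbb{Q})\cong S_3$, where $L$ is the splitting field (the normal closure of $\mathbb{Q}(\alpha)$). Since $\mathbb{K}$ is the fixed field of $A_3\subset S_3$, one has $\mathrm{Gal}(L/\mathbb{K})\cong A_3\cong\mathbb{Z}/3\mathbb{Z}$, so $L/\mathbb{K}$ is automatically cyclic of degree $3$; the only substantive point is that $L/\mathbb{K}$ is \emph{unramified}.

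To prove this I would invoke the standard fact that for a non-Galois cubic field $F=\mathbb{Q}(\alpha)$ with quadratic resolvent $\mathbb{K}$ and Galois closure $L$, one has $\mathrm{disc}(F)=f^2\,\mathrm{disc}(\mathbb{K})$ for an integer conductor $f$, and that $L/\mathbb{K}$ is unramified at all finite primes if and only if $f=1$, i.e. $\mathrm{disc}(F)=\mathrm{disc}(\mathbb{K})$. (This follows from the tower relations $d_L=d_F^2\,d_{\mathbb{K}}$ and $d_L=N_{\mathbb{K}/\mathbb{Q}}(\mathfrak{d}_{L/\mathbb{K}})\,d_{\mathbb{K}}^3$.) The problem thus reduces to a prime-by-prime comparison of $v_p(\mathrm{disc}\,F)$ with $v_p(\mathrm{disc}\,\mathbb{K})$, for which I would run Dedekind's criterion, or equivalently Newton-polygon arguments, on $F_{u,v}$ to compute the index $[\mathcal{O}_F:\mathbb{Z}[\alpha]]$ locally at each $p$. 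For $p\neq 3$, hypothesis (a) that $\gcd(u,v)=1$ is exactly what forces the square factor $u^3$ in $D_{F_{u,v}}$ to be absorbed into the index rather than into genuine ramification: if $p\mid u$ then $p\nmid v$, so $4v^3-27u\not\equiv 0\pmod p$ for odd $p$ (with $p=2$ handled separately), and one checks that $p$ enters the polynomial and the field discriminant to the same power; the symmetric bookkeeping at a prime dividing $4v^3-27u$ shows no residual ramification of $L/\mathbb{K}$ away from $3$.

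The prime $p=3$ is the delicate case, and I expect it to be the main obstacle: the three mutually exclusive congruences (d.1), (d.2), (d.3) are precisely the trichotomy guaranteeing $v_3(\mathrm{disc}\,F)=v_3(\mathrm{disc}\,\mathbb{K})$, i.e. $3\nmid f$. Verifying this demands a careful $3$-adic analysis of $F_{u,v}$: the cases $3\nmid v$, then $3\mid v$ with $uv\not\equiv 3\pmod 9$, then $3\mid v$ with $uv\equiv 3\pmod 9$ correspond to genuinely different Newton polygons and factorization types of $F_{u,v}$ modulo powers of $3$, while the side conditions $u\equiv v\pm 1$ modulo $9$ (resp. $27$) remove the last potential ramification at $3$. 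Once $\mathrm{disc}(F)=\mathrm{disc}(\mathbb{K})$ is established, $L$ is an unramified cyclic cubic extension of $\mathbb{K}$, hence $L$ lies in the Hilbert class field of $\mathbb{K}$; since $[L:\mathbb{K}]=3$, class field theory yields $3\mid h_{\mathbb{K}}$, which is the ``in particular'' assertion.

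For the converse I would start from a quadratic field $\mathbb{K}$ with $3\mid h_{\mathbb{K}}$ and an unramified cyclic cubic extension $L/\mathbb{K}$, corresponding by class field theory to an index-$3$ subgroup of $\mathrm{Cl}(\mathbb{K})$; one arranges $L/\mathbb{Q}$ to be Galois with group $S_3$, using that the nontrivial automorphism of $\mathbb{K}/\mathbb{Q}$ acts on the relevant $3$-quotient. The fixed field $F$ of an order-$2$ subgroup is then a non-Galois cubic field with resolvent $\mathbb{K}$, and the technical heart of the converse is to show that $F$ has a generator whose minimal polynomial takes the rigid shape $Z^3-uvZ-u^2$: after depressing and rescaling ($\alpha\mapsto\lambda\alpha$ sends $X^3+pX+q$ to $X^3+\lambda^2pX+\lambda^3q$), one must choose $\lambda\in\mathbb{Q}^\times$ making the constant term equal to $-u^2$ and the linear coefficient equal to $-uv$ with $u,v\in\mathbb{Z}$. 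I would show that the unramifiedness of $L/\mathbb{K}$ is exactly what makes such a normalization possible while keeping $u,v$ coprime and satisfying the trichotomy (d). The real difficulty here is establishing \emph{surjectivity} of this two-parameter parametrization — that the single form $Z^3-uvZ-u^2$ is rich enough to realize every unramified cyclic cubic extension of every such $\mathbb{K}$ — rather than merely that each admissible $F_{u,v}$ produces one.
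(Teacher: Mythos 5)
This statement is Theorem~A of the paper, quoted verbatim from Kishi--Miyake \cite{KM00}; the paper itself gives no proof of it (it is imported as a black box), so the only meaningful benchmark is the original argument in \cite{KM00}. Measured against that, your outline follows essentially the same route as the source: your discriminant computation $D_{F_{u,v}}=u^3(4v^3-27u)$ is correct; hypotheses (b) and (c) do give $\mathrm{Gal}(L/\mathbb{Q})\cong S_3$ and reduce everything to unramifiedness of $L/\mathbb{K}$; the reduction to the single equality $\mathrm{disc}(F)=\mathrm{disc}(\mathbb{K})$ via $d_F=f^2 d_{\mathbb{K}}$ and the conductor--discriminant formula is the correct and standard criterion; and the prime $3$ is indeed handled in \cite{KM00} by exactly the kind of Llorente--Nart analysis you gesture at --- the paper even records that tool as its Lemma~2.2 (\cite{LN83}). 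Since $[L:\mathbb{K}]=3$ is odd, the infinite places (relevant because $\mathbb{K}$ may be real) cannot ramify, a point worth one sentence that you omit.

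However, as a proof your text has two genuine gaps, both of which you flag but neither of which you fill, and they are precisely where the mathematical content lives. First, the forward direction at $p=3$: you assert that the trichotomy (d.1)--(d.3) with the side congruences $u\equiv v\pm 1\pmod 9$ (resp.\ $\pmod{27}$) forces $v_3(d_F)=v_3(d_{\mathbb{K}})$, but you carry out no case analysis; this congruence bookkeeping modulo $9$ and $27$ is the technical heart of \cite{KM00} and cannot be left as an expectation. Second, the converse: you correctly identify surjectivity of the two-parameter family $Z^3-uvZ-u^2$ as ``the real difficulty,'' but you offer no mechanism for it. A pure rescaling argument ($\lambda^2 p$, $\lambda^3 q$) does not by itself produce \emph{integers} $u,v$ with $\gcd(u,v)=1$ satisfying (d); Kishi--Miyake obtain the normal form from their explicit parametrization of unramified cyclic cubic extensions by elements $\alpha$ of cube norm (the same circle of ideas the paper quotes as Theorem~B), not from scaling alone. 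One local inaccuracy: for $p\mid u$, $p\neq 3$, with $v_p(u)$ odd, the Newton polygon (slopes $-m$ and $-m/2$) shows $p$ \emph{does} ramify in the cubic field $F$, tamely with $e=2$, matching the ramification of $\mathbb{K}$; it is not ``absorbed into the index.'' Your final criterion $v_p(d_F)=v_p(d_{\mathbb{K}})$ is the right one, but the narrative preceding it misdescribes this case.
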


Another approach to construct a cyclic, cubic and unramified extension of a quadratic field is due to Kishi \cite{KI00}. This approach is based on a cubic  polynomial defined by an algebraic integer.  Let $\mathbb{K}$ be a quadratic field and $\mathcal{O}_\mathbb{K}$ its ring of integers. Assume that $\alpha \in \mathcal{O}_\mathbb{K}$ with $N_{\mathbb{K}/\mathbb{Q}}(\alpha)\in \mathbb{Z}^3$. We now define 
\begin{equation*}\label{eq1}
 P_\alpha(X):=X^3-3[N_{\mathbb{K}/\mathbb{Q}} (\alpha)]^{1/3}X-T_{\mathbb{K}/\mathbb{Q}}(\alpha).
\end{equation*}
In \cite{KI98}, Kishi deduced the following criterion for the irreducibility of $P_\alpha(X)$ over $\mathbb{Q}$.
\begin{lem}\label{lemK}
 Let $\mathbb{K}=\mathbb{Q}(\sqrt{d}).$ Suppose $\alpha=\frac{a+b\sqrt{d}}{2} \in {\mathcal{O}}_\mathbb{K}$ 
 with
 $N_{\mathbb{K}/\mathbb{Q}}(\alpha)$ is a cube in $\mathbb{Z}$. Then $P_{\alpha} (X)$  
 is 
 reducible over $\mathbb{Q}$ if and only if $\alpha$ is a cube in $\mathbb{K}$. 
\end{lem}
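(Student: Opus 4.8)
The plan is to exploit the Cardano-type substitution $X = u + n/u$, where I write $n := [N_{\mathbb{K}/\mathbb{Q}}(\alpha)]^{1/3}\in\mathbb{Z}$ (the real cube root, an integer by hypothesis) and $a := T_{\mathbb{K}/\mathbb{Q}}(\alpha)$, so that the polynomial takes the concrete shape $P_\alpha(X) = X^3 - 3nX - a$. First I would dispatch the forward direction. Assuming $\alpha = \delta^3$ with $\delta \in \mathbb{K}$, I observe that $\delta\bar\delta = N_{\mathbb{K}/\mathbb{Q}}(\delta)$ is rational and its cube equals $N_{\mathbb{K}/\mathbb{Q}}(\alpha) = n^3$, so comparing real cube roots gives $\delta\bar\delta = n$. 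Then the rational number $X_0 := T_{\mathbb{K}/\mathbb{Q}}(\delta) = \delta + \bar\delta$ satisfies $X_0^3 = \delta^3 + \bar\delta^3 + 3\delta\bar\delta(\delta+\bar\delta) = a + 3nX_0$, i.e. $P_\alpha(X_0) = 0$. Thus $P_\alpha$ has a rational root and is reducible over $\mathbb{Q}$.

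For the converse I start from the fact that a reducible cubic over $\mathbb{Q}$ has a rational root $r$. The structural heart is to recover a cube root of $\alpha$ from $r$: letting $u$ be a root of $t^2 - rt + n = 0$ and setting $w := n/u$, I get $u + w = r$ and $uw = n$, hence $u^3 + w^3 = r^3 - 3nr = a$ and $(uw)^3 = n^3 = N_{\mathbb{K}/\mathbb{Q}}(\alpha)$. Therefore $u^3$ and $w^3$ are the two roots of $t^2 - at + N_{\mathbb{K}/\mathbb{Q}}(\alpha) = 0$, which are precisely $\alpha$ and $\bar\alpha$; relabelling if necessary, $u^3 = \alpha$. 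The crucial point is that $u$ lies in the field $M := \mathbb{Q}(\sqrt{r^2 - 4n})$, which is either $\mathbb{Q}$ or a quadratic field.

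I would then split on the position of $M$ relative to $\mathbb{K}$. If $M \subseteq \mathbb{K}$ (that is, $M = \mathbb{Q}$ or $M = \mathbb{K}$), then $u \in \mathbb{K}$ and $u^3 = \alpha$ shows directly that $\alpha$ is a cube in $\mathbb{K}$, and we are done. The remaining case is $M \neq \mathbb{K}$, where $M \cap \mathbb{K} = \mathbb{Q}$; since $\alpha = u^3 \in M$ while also $\alpha \in \mathbb{K}$, this forces $\alpha \in \mathbb{Q}$, and being an algebraic integer, $\alpha \in \mathbb{Z}$.

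This last case is where I expect the genuine work, since one must still exhibit a cube root inside $\mathbb{K}$ from the degenerate situation $\alpha \in \mathbb{Z}$. I would finish by an elementary valuation argument: from $N_{\mathbb{K}/\mathbb{Q}}(\alpha) = \alpha^2 = n^3$ one gets $2\,v_p(\alpha) = 3\,v_p(n)$ for every prime $p$, so $3 \mid v_p(\alpha)$, whence $\alpha$ is a perfect cube in $\mathbb{Z} \subseteq \mathbb{K}$ (the sign causing no trouble, as $-c^3 = (-c)^3$). Combining the two cases yields the converse. I anticipate that the only subtle points are this degeneration to $\alpha \in \mathbb{Q}$ and the bookkeeping ensuring that all cube roots are normalized so that $uw = n$ (equivalently $\delta\bar\delta = n$) rather than differing by a nontrivial cube root of unity; everything else is routine.
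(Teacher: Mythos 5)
Your argument is correct, but note that there is nothing in the paper to compare it against: the paper states this lemma without proof, importing it from Kishi's paper \cite{KI98}, so your writeup is effectively a reconstruction of the original proof. What you do is the standard Cardano mechanism underlying Kishi's criterion: the roots of $P_\alpha(X)=X^3-3nX-a$ are the sums $\beta+\beta'$ of cube roots of $\alpha$ and $\bar\alpha$ normalized so that $\beta\beta'=n$. Your forward direction is airtight: $N_{\mathbb{K}/\mathbb{Q}}(\delta)^3=n^3$ with both sides rational forces $\delta\bar\delta=n$ (no cube-root-of-unity ambiguity, exactly as you say), and then $T_{\mathbb{K}/\mathbb{Q}}(\delta)$ is a rational root. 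Your converse is also essentially complete, and the two points where you anticipated trouble are handled correctly: the identification $\{u^3,w^3\}=\{\alpha,\bar\alpha\}$ via elementary symmetric functions, and the degenerate case $M=\mathbb{Q}(\sqrt{r^2-4n})\neq\mathbb{K}$, where $M\cap\mathbb{K}=\mathbb{Q}$ forces $\alpha\in\mathbb{Z}$ and the valuation identity $2v_p(\alpha)=3v_p(n)$ together with $\gcd(2,3)=1$ gives $3\mid v_p(\alpha)$; the sign remark $-c^3=(-c)^3$ disposes of the last obstruction. Only two trivial patches should be made explicit: (i) the case $\alpha=0$ (equivalently $n=0$), where $w=n/u$ is ill-defined; there $a=0$, $P_\alpha=X^3$ is reducible and $\alpha=0^3$ is a cube, so the equivalence holds vacuously and you may assume $n\neq 0$, hence $u\neq 0$, throughout; (ii) the double-root case $\alpha=\bar\alpha$ of $t^2-at+N_{\mathbb{K}/\mathbb{Q}}(\alpha)$, where the multiset identification still yields $u^3=\alpha$ with no relabelling needed. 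With those two sentences added, the proof is complete and, in substance, the same argument as in \cite{KI98}.
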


Let $d$ be a square-free integer other than $1$ and $-3$. Set
$$
\begin{displaystyle}
D:= 
\begin{cases}
-d/3 \hspace*{2mm}\text{if } 3\mid d, \\
  -3d \hspace*{4mm}\text{otherwise}. 
  \end{cases}
  \end{displaystyle}
$$
Let 
$\mathbb{K}=\mathbb{Q}(\sqrt{d})$ and $\mathbb{L}=\mathbb{Q}(\sqrt{D})$. We define,
$$
R_d:=\{\alpha \in\mathcal{O}_\mathbb{K}: \alpha \text{ is not  a  cube in } \mathbb{K}, \text{ but} \ N_{\mathbb{K}/\mathbb{Q}}(\alpha)\text{ is a  cube in } \mathbb{Z}\}
$$
and
$$
R_D:=\{\alpha \in\mathcal{O}_\mathbb{L}: \alpha \text{ is not a cube in } \mathbb{L} \text{ and } N_{\mathbb{L}/\mathbb{Q}}(\alpha)\text{ is a cube in } \mathbb{Z}\}.
$$
It is very clear that the subset $R_d$ (resp. $R_D$) contains all those units in $\mathbb{K}$ which are not cubes in $\mathbb{K}$ (resp. in $\mathbb{L}$). Further assume that
$$
R^*_d:=\{\alpha\in R_d: \gcd(N_{\mathbb{K}/\mathbb{Q}}(\alpha), T_{\mathbb{K}/\mathbb{Q}}(\alpha))=1\}
$$
and 
$$
R^*_D:=\{\alpha\in R_D: \gcd(N_{\mathbb{L}/\mathbb{Q}}(\alpha), T_{\mathbb{L}/\mathbb{Q}}(\alpha))=1\}.
$$

With the help of elements in $R^*_d$ (resp. $R^*_D$), one can construct a cyclic, cubic and unramified extension of $\mathbb{K}$ (resp. $\mathbb{L}$). Kishi used this idea to construct such unramified extensions except at $3$. More precisely, he proved the following:  
\begin{thma}[{\cite[Proposition 6.5]{KI00}}]\label{thmB}
 Let $\alpha \in R^*_D$ (resp. $\alpha \in R^*_d$). Then the splitting field, $S_{\mathbb{Q}}(P_\alpha)$ of $P_\alpha(X)$ over $\mathbb{Q}$ is an $S_3$-field containing $\mathbb{K}=\mathbb{Q}(\sqrt{d})$ (resp. $\mathbb{L}=\mathbb{Q}(\sqrt{D})$) which is a cyclic cubic extension of $\mathbb{K}$ (resp. $\mathbb{L}$) unramified outside $3$ and contains a cubic subfield $\mathbb{K}'$ with $\mathit{v}_3(\Delta_{\mathbb{K}'})\ne 5.$ Conversely, every $S_3$-field containing $\mathbb{K}$ (resp. $\mathbb{L}$) which is unramified outside $3$ over $\mathbb{K}$ (resp. $\mathbb{L}$) and contains a cubic subfield $\mathbb{K}'$ satisfying $\mathit{v}_3(\Delta_{\mathbb{K}'})\ne 5$ is given by $S_{\mathbb{Q}}(P_\alpha)$ with $\alpha \in R^*_D$ (resp. $\alpha \in R^*_d$). 
 \end{thma}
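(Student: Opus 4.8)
The plan is to establish the direct statement for $\alpha\in R^*_D$ (so that the splitting field contains $\mathbb{K}=\mathbb{Q}(\sqrt d)$); the companion statement for $\alpha\in R^*_d$ follows verbatim by interchanging the roles of $d$ and $D$. Write $n=N_{\mathbb{L}/\mathbb{Q}}(\alpha)$, $t=T_{\mathbb{L}/\mathbb{Q}}(\alpha)$, fix $c\in\mathbb{Z}$ with $c^3=n$ so that $P_\alpha(X)=X^3-3cX-t$, set $M=S_{\mathbb{Q}}(P_\alpha)$, and let $\zeta$ be a primitive cube root of unity. First I would apply Lemma~\ref{lemK}: since $\alpha\in R^*_D\subset R_D$ is not a cube in $\mathbb{L}$ while $n$ is a cube, $P_\alpha$ is irreducible over $\mathbb{Q}$. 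A direct computation gives $\operatorname{disc}(P_\alpha)=27(4n-t^2)=-27b^2D$ for $\alpha=\tfrac{a+b\sqrt D}{2}$, whose square class is that of $-3D$; checking both branches of the definition of $D$ yields $\mathbb{Q}(\sqrt{-3D})=\mathbb{Q}(\sqrt d)=\mathbb{K}$, a genuine quadratic field, so in particular the discriminant is not a square. Hence $\operatorname{Gal}(M/\mathbb{Q})=S_3$ with quadratic resolvent exactly $\mathbb{K}$, and since $\mathbb{K}$ is the fixed field of $A_3\trianglelefteq S_3$, the extension $M/\mathbb{K}$ is cyclic cubic.

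The heart is unramifiedness outside $3$, which I would route through Kummer theory. Cardano presents the roots as $\theta_j=\zeta^j\alpha^{1/3}+\zeta^{-j}\bar\alpha^{1/3}$ with the cube roots normalized by $\alpha^{1/3}\bar\alpha^{1/3}=c\in\mathbb{Q}$; tracking these shows $M(\zeta)=\mathbb{L}(\zeta,\alpha^{1/3})$, while $\sqrt{-3}\,\sqrt{-3D}=\pm 3\sqrt D$ gives $\mathbb{K}(\zeta)=\mathbb{Q}(\sqrt{-3},\sqrt D)=\mathbb{L}(\zeta)$. Now the coprimality built into $R^*_D$ is decisive: a prime of $\mathbb{L}$ dividing both $\alpha$ and $\bar\alpha$ would divide the ideal $(T(\alpha),N(\alpha))=\mathcal{O}_{\mathbb{L}}$, so $(\alpha)$ and $(\bar\alpha)$ are coprime; since $(\alpha)(\bar\alpha)=(c)^3$ is a cube, unique factorization forces $(\alpha)=\mathfrak{a}^3$ for an ideal $\mathfrak{a}$ of $\mathbb{L}$. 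Consequently $v_{\mathfrak{Q}}(\alpha)\equiv 0\pmod 3$ at every prime $\mathfrak{Q}\nmid 3$ of $\mathbb{L}(\zeta)$, so the tame case of Kummer theory makes $\mathbb{L}(\zeta,\alpha^{1/3})/\mathbb{L}(\zeta)$ unramified outside $3$ (automatically including primes over $2$, the degree being coprime to $2$). Since $\mathbb{K}(\zeta)/\mathbb{K}$ ramifies only above $3$, multiplicativity of ramification indices in the tower then descends this to $M/\mathbb{K}$ unramified outside $3$.

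It remains to exhibit the cubic subfield $\mathbb{K}'=\mathbb{Q}(\theta_0)$ and to verify $v_3(\Delta_{\mathbb{K}'})\neq 5$. This I would settle by localizing at $3$: because $\gcd(n,t)=1$, at most one of $c,t$ is divisible by $3$, and a Newton-polygon analysis of $P_\alpha(X)=X^3-3cX-t$ over $\mathbb{Q}_3$ in the resulting cases pins down the ramification (for instance, when $3\nmid t$ one has $P_\alpha\equiv (X-t)^3\pmod 3$, and the shift $X=t+Z$ exposes the totally ramified type). The special shape—linear coefficient divisible by $3$ together with $\gcd(c,t)=1$—restricts the admissible $3$-adic cubic algebras and yields $v_3(\Delta_{\mathbb{K}'})\in\{0,1,3,4\}$, excluding the maximal value $5$ realized by Eisenstein cubics such as $X^3-3$. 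I expect this local computation to be the main obstacle, since $v_3=5$ is precisely the case the parametrization is designed to omit, so the inequality must be extracted by hand rather than read off.

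For the converse I would reverse the construction. Given an $S_3$-field $M$ with quadratic subfield $\mathbb{K}$, a cubic subfield $\mathbb{K}'$ satisfying $v_3(\Delta_{\mathbb{K}'})\neq 5$, and $M/\mathbb{K}$ unramified outside $3$, adjoining $\zeta$ and using $\mathbb{K}(\zeta)=\mathbb{L}(\zeta)$ makes $M(\zeta)/\mathbb{L}(\zeta)$ a cyclic cubic Kummer extension $\mathbb{L}(\zeta,\beta^{1/3})$. Requiring $\operatorname{Gal}(M(\zeta)/\mathbb{Q})\cong S_3$ rather than $C_3\times C_2$ forces, after adjusting $\beta$ by cubes, a representative $\alpha\in\mathbb{L}$ with $N_{\mathbb{L}/\mathbb{Q}}(\alpha)$ a cube; the unramified-outside-$3$ hypothesis gives $(\alpha)=\mathfrak{a}^3$, which permits scaling to $\gcd(N(\alpha),T(\alpha))=1$, i.e.\ $\alpha\in R^*_D$. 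The hypothesis $v_3(\Delta_{\mathbb{K}'})\neq 5$ guarantees that $\alpha$ lies in the range for which $S_{\mathbb{Q}}(P_\alpha)$ recovers $M$ (the excluded $v_3=5$ fields being exactly those outside the image of the construction), and comparing quadratic resolvents and discriminants identifies $S_{\mathbb{Q}}(P_\alpha)=M$. Finally, the whole argument is symmetric under interchanging $(d,\mathbb{K})$ with $(D,\mathbb{L})$—the map $d\mapsto D$ being an involution on squarefree classes—so the parenthetical \emph{resp.}\ statements require no separate treatment.
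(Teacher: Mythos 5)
The paper never proves this statement: it is imported verbatim as Theorem~\ref{thmB} from Kishi \cite[Proposition 6.5]{KI00}, so your attempt can only be measured against Kishi's original argument, which your sketch does broadly parallel (Cardano roots $\theta_j=\zeta^j\alpha^{1/3}+\zeta^{-j}\bar\alpha^{1/3}$, the identity $M(\zeta)=\mathbb{L}(\zeta,\alpha^{1/3})$ with $\mathbb{K}(\zeta)=\mathbb{L}(\zeta)$, and the decisive use of $\gcd(N(\alpha),T(\alpha))=1$ to force $(\alpha)=\mathfrak{a}^3$). Your direct direction is essentially sound: the discriminant computation $\operatorname{disc}(P_\alpha)=-27b^2D$ correctly identifies the quadratic resolvent as $\mathbb{K}$ in both branches of the definition of $D$, the tame Kummer criterion plus multiplicativity of ramification indices gives unramifiedness of $M/\mathbb{K}$ outside $3$, and even the $v_3(\Delta_{\mathbb{K}'})\ne 5$ claim you flag as the ``main obstacle'' closes cleanly via Llorente--Nart \cite{LN83}: $v_3(\Delta_{\mathbb{K}'})=5$ requires (in reduced form) $v_3(b)=1$ together with $v_3(a)\ge 2$, whereas here $a=3c$, $b=t$ with $\gcd(c,t)=1$, so $3\mid t$ forces $3\nmid c$, i.e.\ $v_3(a)=1$ --- the configuration for exponent $5$ simply cannot occur, confirming your claimed value set $\{0,1,3,4\}$.

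The genuine gap is in your converse, at two specific points. First, the step ``the unramified-outside-$3$ hypothesis gives $(\alpha)=\mathfrak{a}^3$, which permits scaling to $\gcd(N(\alpha),T(\alpha))=1$'' is asserted where the real work lives: Kummer theory initially produces only a class in $\mathbb{L}(\zeta)^{\times}/(\mathbb{L}(\zeta)^{\times})^3$, and one must (i) use the eigenspace decomposition under $\mathrm{Gal}(\mathbb{L}(\zeta)/\mathbb{Q})\cong(\ZZ/2\ZZ)^2$ to descend to a genuine $\alpha\in\mathcal{O}_{\mathbb{L}}$ with cubic norm (your one-line appeal to ``$S_3$ rather than $C_3\times C_2$'' compresses this), and then (ii) adjust $\alpha$ by cubes and units to achieve the integral coprimality of trace and norm --- including at the prime $2$, where half-integral representatives $(a+b\sqrt{D})/2$ cause parity complications; this occupies a substantial portion of Kishi's paper and is not a routine scaling. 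Second, your treatment of the hypothesis $v_3(\Delta_{\mathbb{K}'})\ne 5$ is circular: you write that it ``guarantees that $\alpha$ lies in the range for which $S_{\mathbb{Q}}(P_\alpha)$ recovers $M$ (the excluded $v_3=5$ fields being exactly those outside the image of the construction),'' but that the image of the construction is exactly the set of fields with $v_3(\Delta_{\mathbb{K}'})\ne 5$ \emph{is} the content of the converse. What must actually be shown is that when $v_3(\Delta_{\mathbb{K}'})\in\{0,1,3,4\}$ the Kummer generator admits a $3$-adic normalization landing in $R^*_D$, and that exponent $5$ is the unique obstruction; without that local analysis the converse is a restatement of the theorem rather than a proof. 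Your closing symmetry remark (that $d\mapsto D$ is an involution, so the \emph{resp.}\ statements need no separate argument) is fine.
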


The above theorem helps us to construct cyclic, cubic and unramified extensions outside $3$. Thus, we need to handle the prime $3$, and the following result comes here to rescue us in this case. This result is a particular case of \cite[Theorem 1]{LN83}.
 \begin{lem}\label{lemr3}
  Suppose that
  $$
  g(X):= X^3-aX-b\in \mathbb{Z}[X]
  $$
 is irreducible over $\mathbb{Q}$ and that either $\mathit{v}_3(a)<2$ or $\mathit{v}_3(b)<3$ holds. Let $\theta$ be a root of $g(X).$ 
 Then $3$ is totally ramified in $\mathbb{Q}(\theta)/\mathbb{Q}$ if and only if one of the following conditions holds:
 \begin{enumerate}
  \item[(i)] $1\leq \mathit{v}_3(b)\leq \mathit{v}_3(a),$
  \item[(ii)] $3\mid a, \ a\not\equiv3 \pmod 9, \ 3\nmid b \ and \ b^2\not\equiv a+1 \pmod 9,$
  \item[(iii)] $ a \equiv3 \pmod 9, \ 3\nmid b \ and \ b^2\not\equiv a+1 \pmod {27}.$
 \end{enumerate}
 \end{lem}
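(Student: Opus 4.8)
The plan is to localize at $3$ and convert the statement into an irreducibility question over $\mathbb{Q}_3$. Since $[\mathbb{Q}(\theta):\mathbb{Q}]=3$ is prime and equal to the residue characteristic, $3$ is totally ramified exactly when $g$ is irreducible over $\mathbb{Q}_3$ and the local extension is ramified; equivalently, when $g$ is irreducible over $\mathbb{Q}_3$ while $g\bmod 3$ is reducible over $\mathbb{F}_3$. I would first dispose of the case $3\nmid a$: then $g'=3X^2-a$ is a nonzero constant modulo $3$, so $g$ is separable mod $3$, hence $3$ is unramified, and one checks directly that none of (i)--(iii) can hold here. So one may assume $3\mid a$, whence $g\equiv(X-b)^3\pmod 3$ is reducible and the problem collapses to deciding whether $g$ is irreducible over $\mathbb{Q}_3$. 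The engine will be the $3$-adic Newton polygon of $g$ (after a shift to the repeated root) together with Ore's residual polynomials, split according to $v_3(b)$.

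If $3\mid b$, the roots reduce to $0$ and I work with $g$ itself: its Newton polygon has vertices among $(0,v_3(b))$, $(1,v_3(a))$, $(3,0)$, and I would show it is a single segment of slope $-v_3(b)/3$ precisely when $v_3(a)\ge v_3(b)$. The hypothesis $v_3(a)<2$ or $v_3(b)<3$ then forces $v_3(b)\in\{1,2\}$, so the slope has denominator $3$ and $g$ is totally ramified --- this is (i). When $v_3(a)<v_3(b)$ the point $(1,v_3(a))$ becomes a vertex, producing a segment of horizontal length $1$, hence a root in $\mathbb{Q}_3$ and a non-totally-ramified prime; the hypothesis is exactly what excises the borderline configurations (such as $v_3(b)=3$) in which $\theta/3$ would satisfy a smaller cubic.

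If $3\nmid b$, I substitute $X=Y+b$ to obtain $h(Y)=Y^3+3bY^2+(3b^2-a)Y+b(b^2-a-1)$ and record the identity $v_3(g(b))=v_3(b^2-a-1)=:m$, with $m\ge1$ automatic. Reading the Newton polygon of $h$ off its coefficient valuations --- constant $m$, linear $1+v_3(b^2-a/3)$, quadratic $1$, leading $0$ --- gives: $m=1$ yields slope $-1/3$ (totally ramified); $m=2$ yields slope $-2/3$, hence total ramification, precisely when the linear coefficient has valuation $\ge2$, i.e. $a\equiv3\pmod 9$; and $m\ge3$ never gives total ramification. Rewriting $b^2\not\equiv a+1\pmod 9$ as $m=1$ and $b^2\not\equiv a+1\pmod{27}$ as $m\le2$, the subcase $a\not\equiv3\pmod 9$ reproduces (ii) and the subcase $a\equiv3\pmod 9$ reproduces (iii).

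The main obstacle is the integer-slope borderline inside this last case, namely $m=3$ with $a\equiv3\pmod 9$: here the Newton polygon is a single segment of slope $-1$, so the polygon alone does not decide irreducibility, and one must pass to the residual cubic over $\mathbb{F}_3$ (or, when that cubic is inseparable, to a second-order Newton polygon in the sense of Montes, which is where $\theta\mapsto(\theta-b)/3$ recurses) and verify that every resulting local factor is unramified, so that $3$ is split or inert but never totally ramified. Carrying this out carefully --- and confirming that the hypothesis $v_3(a)<2$ or $v_3(b)<3$ precisely removes the non-primitive polygons in the $3\mid b$ strand --- is the delicate bookkeeping. Alternatively, the whole statement is the ``totally ramified at $3$'' row of the Llorente--Nart decomposition tables, so it also follows directly from \cite{LN83}.
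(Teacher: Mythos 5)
The paper does not actually prove this lemma: it is stated verbatim as a special case of \cite[Theorem 1]{LN83} and used as a black box, which is exactly the alternative you mention in your last sentence. Your proposal therefore does strictly more than the paper, reconstructing the ``totally ramified at $3$'' row of the Llorente--Nart table by hand from $3$-adic Newton polygons, and the skeleton is correct: the case $3\nmid a$ is killed by separability of $g\bmod 3$; condition (i) comes from the polygon of $g$ itself when $3\mid b$ (under the standing hypothesis $v_3(b)\in\{1,2\}$ in the one-segment case, so the slope $-v_3(b)/3$ has denominator $3$ and forces $e=3$, while $v_3(a)<v_3(b)$ produces a length-one segment and hence a linear factor over $\mathbb{Q}_3$); and conditions (ii)--(iii) come from the polygon of $g(Y+b)$ governed by $m=v_3(b^2-a-1)$, with the dichotomy $v_3(3b^2-a)\ge 2\iff a\equiv 3\pmod 9$ correctly separating the two.

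Two caveats. First, your sentence ``equivalently, when $g$ is irreducible over $\mathbb{Q}_3$ while $g\bmod 3$ is reducible over $\mathbb{F}_3$'' is false, as is the ensuing claim that the problem ``collapses to deciding whether $g$ is irreducible over $\mathbb{Q}_3$.'' For example, $g=X^3-3X-25$ is the minimal polynomial of $1+3\beta$ with $\beta^3+\beta^2-1=0$, hence irreducible over $\mathbb{Q}_3$ with $3$ inert (unramified), yet $g\equiv(X-1)^3\pmod 3$; here $m=3$ and $b^2\equiv a+1\pmod{27}$, so (iii) rightly fails. The slip is not load-bearing, since your actual case analysis reads off total ramification from slope denominators rather than from irreducibility, but those two sentences should be deleted. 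Second, the step you leave open --- the residual cubic when $m=3$ and $a\equiv3\pmod 9$ --- closes more easily than you fear: the residual polynomial of the slope $-1$ segment is $Y^3+\bar{b}\,Y^2+\cdots$ over $\mathbb{F}_3$ with $\bar{b}\ne 0$ (the point $(2,1)$ always lies on the segment), whereas a triple root would force $(Y-r)^3=Y^3-r$ with vanishing quadratic coefficient. So the residual cubic is never totally inseparable; it is either separable or has a simple linear factor, and in every case $g$ acquires over $\mathbb{Q}_3$ either an unramified irreducible factor or a degree-one factor, so $3$ is never totally ramified there. No second-order Montes step is needed, and with that observation your argument is complete.
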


\section{Some families of real quadratic fields with class number divisible by $3$}
\begin{prop}\label{ld}
For any positive integer $x$, the class number of the real quadratic field $\mathbb{Q}(\sqrt{216000x^3+457200x^2+322580x+75866})$ is divisible by $3$. 
\end{prop}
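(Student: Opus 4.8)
The plan is to realize $\mathbb{Q}(\sqrt{R})$, where I abbreviate $R:=216000x^3+457200x^2+322580x+75866$, as the quadratic resolvent of an explicit cubic to which Kishi's machinery (Theorem \ref{thmB} together with Lemma \ref{lemr3}) applies. The key observation is the identity $27R=(180x+127)^3-1$, so that, writing $t:=180x+127$, one has $27R=t^3-1$. This suggests working in the imaginary quadratic field $\mathbb{K}:=\mathbb{Q}(\sqrt{-3R})$ and taking $\alpha:=1+\sqrt{-27R}=1+3\sqrt{-3R}\in\mathcal{O}_{\mathbb{K}}$. Then $N_{\mathbb{K}/\mathbb{Q}}(\alpha)=1+27R=t^3$ is a perfect cube and $T_{\mathbb{K}/\mathbb{Q}}(\alpha)=2$, so the associated Kishi polynomial is $P_\alpha(X)=X^3-3tX-2$. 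Its discriminant is $108(t^3-1)=54^2R$, whence the quadratic subfield of $S_{\mathbb{Q}}(P_\alpha)$ is exactly $\mathbb{Q}(\sqrt{R})$. Since the squarefree part $d$ of $-3R$ satisfies $3\mid d$, we are in the case $D=-d/3$, and $\mathbb{L}=\mathbb{Q}(\sqrt{D})=\mathbb{Q}(\sqrt{R})$ is the real quadratic field in the statement, matching the ``resp.'' case of Theorem \ref{thmB}.

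First I would check that $\alpha\in R_d^*$. As $t=180x+127$ is odd, $\gcd(N_{\mathbb{K}/\mathbb{Q}}(\alpha),T_{\mathbb{K}/\mathbb{Q}}(\alpha))=\gcd(t^3,2)=1$. Moreover $P_\alpha(X)$ has no rational root (the only candidates $\pm1,\pm2$ are ruled out by $P_\alpha(1)=-1-3t$, $P_\alpha(-1)=3(t-1)$, $P_\alpha(2)=6(1-t)$ and $P_\alpha(-2)=6t-10$, all nonzero for $t=180x+127$), so $P_\alpha$ is irreducible over $\mathbb{Q}$; by Lemma \ref{lemK} this means $\alpha$ is not a cube in $\mathbb{K}$. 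Hence $\alpha\in R_d^*$, and Theorem \ref{thmB} yields an $S_3$-extension $S_{\mathbb{Q}}(P_\alpha)$ that is cyclic and cubic over $\mathbb{L}=\mathbb{Q}(\sqrt{R})$ and unramified outside $3$.

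The crux is to promote ``unramified outside $3$'' to ``everywhere unramified'', which is where Lemma \ref{lemr3} enters. Applying it to $g(X)=X^3-3tX-2$, i.e. with $a=3t$ and $b=2$, the hypothesis holds since $v_3(b)=0<3$. I would then show that all three ramification conditions fail: condition (i) fails because $v_3(b)=0$ violates $1\le v_3(b)$; condition (ii) fails because $a=3t\equiv3\pmod9$ violates the clause $a\not\equiv3\pmod9$; and — the delicate point — condition (iii) fails because $a+1=3t+1\equiv4\pmod{27}$ while $b^2=4$, so $b^2\equiv a+1\pmod{27}$. Therefore $3$ is not totally ramified in $\mathbb{Q}(\theta)/\mathbb{Q}$, so the inertia group at a prime above $3$ meets $\mathrm{Gal}(S_{\mathbb{Q}}(P_\alpha)/\mathbb{L})\cong A_3$ trivially, and the cyclic cubic extension $S_{\mathbb{Q}}(P_\alpha)/\mathbb{L}$ is unramified at $3$ as well. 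Thus $S_{\mathbb{Q}}(P_\alpha)$ is an everywhere unramified cyclic cubic extension of $\mathbb{L}$, and class field theory gives $3\mid h(\mathbb{L})$.

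I expect the genuinely engineered — and hence most delicate — step to be the prime $3$: the specific constant $127$, through the congruences $180x+127\equiv1\pmod9$ (forcing $a\equiv3\pmod9$) and $b^2=4\equiv a+1\pmod{27}$, is precisely what kills condition (iii) of Lemma \ref{lemr3}; almost any other choice would leave $3$ totally ramified and break the argument. A minor remaining point is to confirm that $\mathbb{Q}(\sqrt{R})$ is a genuine quadratic field, equivalently that $R$ is never a perfect square (equivalently that $S_{\mathbb{Q}}(P_\alpha)$ is truly $S_3$ rather than cyclic), which should follow from a short size or congruence argument.
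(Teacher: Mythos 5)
Your argument is correct, but it takes a genuinely different route from the paper's proof of Proposition \ref{ld}. The paper applies the Kishi--Miyake criterion (Theorem \ref{KMtheorem}) directly: it sets $u=4$, $v=3(180x+127)$, so $F_{u,v}(Z)=Z^3-12(180x+127)Z-16$, checks irreducibility modulo $5$, identifies $\Delta_{F_{u,v}}$ as a square times $R$, and verifies condition (d.3) via $uv\equiv 3\pmod 9$ and $u\equiv v+1\pmod{27}$. You instead run the paper's \emph{second} method --- the one the authors reserve for Proposition \ref{ab} --- exploiting the identity $27R=(180x+127)^3-1$ to build $\alpha=1+\sqrt{-27R}$ with $N(\alpha)=t^3$, $T(\alpha)=2$, and then invoking Lemma \ref{lemK}, Theorem \ref{thmB} and Lemma \ref{lemr3}. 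All of your computations check out: $P_\alpha(X)=X^3-3tX-2$ has discriminant $54^2R$, and since $t\equiv 1\pmod 9$ one gets $a=3t\equiv 3\pmod 9$ and $b^2=4\equiv a+1\pmod{27}$, which defeats conditions (i)--(iii) of Lemma \ref{lemr3} exactly as in the paper's handling of Proposition \ref{ab}. Amusingly, the two routes certify the same extension: the substitution $Z=2X$ turns the paper's cubic $Z^3-12tZ-16$ into $8(X^3-3tX-2)$, so condition (d.3) and your Lemma \ref{lemr3} computation are two different ramification checks at $3$ for one and the same cubic field. The paper's route is shorter because Theorem \ref{KMtheorem} packages the behaviour at $3$ into a single congruence; yours makes the reflection $\QQ(\sqrt{-3R})\leftrightarrow\QQ(\sqrt{R})$ and the local analysis at $3$ explicit. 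Two loose ends you should close: first, you do need to verify that $R$ is not a perfect square (otherwise $d=-3$ falls outside the setup of Section 2 and the splitting field need not be an $S_3$-field); this is immediate from $R\equiv 2\pmod 4$, which is the congruence the paper uses. Second, your parenthetical claim that the squarefree part $d$ of $-3R$ always satisfies $3\mid d$ can fail (for instance $3\mid R$ when $x\equiv 2\pmod 3$), but this is harmless: in every case $\mathbb{L}=\QQ(\sqrt{-3d})=\QQ(\sqrt{9R})=\QQ(\sqrt{R})$, so the identification of the target field stands.
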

\begin{proof} Let us choose $u=4$ and $v=3(180x+127)$. Then $\gcd(u, v)=1$. We set:
$$F_{u,v}(Z):=Z^3-12(180x+127)Z-16.$$ 
Now, reading $F_{u,v}(Z)$ under modulo $5$, we get $F_{u,v}(Z)\equiv Z^3-4Z-1\pmod 5$. It is easy to check that $F_{u,v}(Z)\pmod 5$ is irreducible, and thus $F_{u,v}(Z)$ is irreducible as a polynomial with integer coefficients as well. 

The discriminant of $F_{u,v}(Z)$ is $$\Delta_{F_{u, v}}=4(12(180x+127))^3-27\cdot 16^2.$$  This can be simplified as 
$\Delta_{F_{u, v}}=12^4D$, where 
$D=216000x^3+457200x^2+322580x+75866$. Since $D\equiv 2\pmod 4$, $D$ is not a square in $\mathbb{Z}$ and so is $\Delta_{F_{u,v}}$.

We see that $3\mid v$, and $uv\equiv 3\pmod 9$. Further, $v+1\equiv 4\pmod {27}$. Therefore, $F_{u,v}(Z)$ satisfies the conditions (a)-(c) and (d.3) of Theorem \ref{KMtheorem}. This completes the proof by Theorem \ref{KMtheorem}. 
\end{proof}
\begin{prop}\label{sd}
For any positive integer $y$, the class number of the real quadratic field $\mathbb{Q}(\sqrt{432y^3+1080y^2+900y+223})$ is divisible by $3$. 
\end{prop}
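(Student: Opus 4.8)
The plan is to prove this exactly as Proposition \ref{ld} was handled, namely by invoking the Kishi--Miyake criterion (Theorem \ref{KMtheorem}) for a well-chosen pair $(u,v)$. Writing $D_2 := 432y^3+1080y^2+900y+223$, the goal is to select integers $u$ and $v$ so that the discriminant $\Delta_{F_{u,v}}=4(uv)^3-27u^4$ of $F_{u,v}(Z)=Z^3-uvZ-u^2$ equals a perfect square times $D_2$; then $\mathbb{K}=\mathbb{Q}(\sqrt{\Delta_{F_{u,v}}})=\mathbb{Q}(\sqrt{D_2})$ and Theorem \ref{KMtheorem} forces $3$ to divide its class number. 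The decisive observation that unlocks the right parametrization is the identity
\[
432y^3+1080y^2+900y+223 = 2(6y+5)^3-27,
\]
obtained by completing the cube; this is what I would look for first.

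Guided by this identity, I would take $u=2$ and $v=6y+5$, so that $uv=12y+10=2(6y+5)$ and hence
\[
\Delta_{F_{u,v}} = 4(12y+10)^3 - 27\cdot 16 = 32(6y+5)^3 - 432 = 16\bigl(2(6y+5)^3-27\bigr)=16\,D_2 .
\]
Since $16=4^2$ is a perfect square, $\mathbb{Q}(\sqrt{\Delta_{F_{u,v}}})=\mathbb{Q}(\sqrt{D_2})$, which is precisely the field in the statement. Conditions (a) and (c) of Theorem \ref{KMtheorem} are then routine: $\gcd(u,v)=\gcd(2,6y+5)=1$ because $6y+5$ is odd, while $D_2\equiv 2(6y+5)^3-27\equiv 2+1\equiv 3\pmod 4$ shows $D_2$ (and hence $\Delta_{F_{u,v}}$) is not a perfect square.

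For condition (d) I would note that $v=6y+5\equiv 2\pmod 3$, so $3\nmid v$ and we land in the simplest case (d.1), valid for every $y$. The one step requiring genuine care is the irreducibility condition (b): reduction modulo $5$ now depends on $y$ (since $uv\equiv 2y\pmod 5$) and so gives no uniform certificate, unlike in Proposition \ref{ld}. Instead I would reduce modulo $3$, where $F_{u,v}(Z)\equiv Z^3-Z-1\pmod 3$ independently of $y$; this cubic has no root in $\mathbb{F}_3$, hence is irreducible over $\mathbb{F}_3$, and therefore $F_{u,v}(Z)$ is irreducible over $\mathbb{Q}$ for all $y$. With (a)--(c) and (d.1) verified, Theorem \ref{KMtheorem} completes the proof. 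The main obstacle is thus the initial reverse-engineering---spotting the cube identity that pins down $(u,v)$---together with choosing the correct prime (here $3$ rather than $5$) at which the reduced polynomial is simultaneously irreducible for all $y$.
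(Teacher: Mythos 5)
Your proposal is correct and follows essentially the same route as the paper: the identity $432y^3+1080y^2+900y+223=2(6y+5)^3-27$ leads to the same choice $u=2$, $v=6y+5$, with irreducibility checked modulo $3$ and case (d.1) applying since $3\nmid v$. Your verification that the discriminant is not a square via $D_2\equiv 3\pmod 4$ is sound (and in fact cleaner than the paper's congruence-mod-$3$ remark at that step).
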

\begin{proof} The proof of this proposition is very similar to that of Proposition \ref{ld}. Nevertheless, we give the proof in brief for the sake of completeness. 

We put $u=2$ and $v=6y+5$. Then $\gcd(u, v)=1$, and we define, $$F_{u,v}(Z):=Z^3-2(6y+5)Z-4. $$
The discriminant of $F_{u,v}(Z)$ is $$\Delta_{F_{u, v}}=4(2(6y+5))^3-27\cdot 4^2,$$ which can be simplified as 
$4^2d$ with $d=432y^3+1080y^2+900y+223$. As $d\equiv 2\pmod 3$, $d$ is not a square in $\mathbb{Z}$ and so is $\Delta_{F_{u, v}}$. 
It is easy to see that $F_{u,v}(Z)\pmod 3$ is irreducible, and thus $F_{u,v}(Z)$ is irreducible over $\mathbb{Z}$ too.  
Since $3\nmid v$, therefore by Theorem \ref{KMtheorem}, we conclude the proof. 
\end{proof}
  In the next proposition, we will use the second method to construct unramified, cyclic cubic extension of a quadratic field. To establish the $3$-divisibility of the class number, we need Hilbert class field. Therefore the for shake of completeness,  we briefly recall the Hilbert class field. The Hilbert class field of a number field  $\mathbb{K}$, denoted by $H(\mathbb{K})$, is defined as the maximal unramified abelian extension of $\mathbb{K}$, which contains all other unramified abelian extensions of $\mathbb{K}$.
\begin{thma}[{\cite[Theorem 5.23 (Artin Reciprocity)]{cox}}]\label{CFTthma}
Let $\mathbb{K}$ be a number field with its group of fractional ideals and class group respectively, $\mathcal{I}_\mathbb{K}$ and $Cl(\mathbb{K})$. If  $H(\mathbb{K})$ is the Hilbert class field of  $\mathbb{K}$, then the Artin map
$$\Phi:~~\mathcal{I}_\mathbb{K}\longrightarrow\mathrm{Gal}(H(\mathbb{K})/\mathbb{K})$$
is surjective, and its kernel is exactly the subgroup $P_\mathbb{K}$ of principal fractional ideals. Thus this map induces the following deep correspondence
$$\mathrm{Gal}(H(\mathbb{K})/\mathbb{K})\cong Cl(\mathbb{K}).$$ 
\end{thma}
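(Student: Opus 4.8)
The statement is the unramified case of the Artin reciprocity law, the cornerstone of class field theory, so rather than attempt a self-contained argument I would organize the proof around its three standard ingredients and isolate where the genuine difficulty lies. The plan is to proceed as follows. First I would make sense of the map $\Phi$ itself. Because $H(\mathbb{K})/\mathbb{K}$ is abelian and, by the very definition of the Hilbert class field, unramified at every prime, each prime ideal $\mathfrak{p}$ of $\mathcal{O}_\mathbb{K}$ determines a well-defined Frobenius automorphism $\left(\frac{H(\mathbb{K})/\mathbb{K}}{\mathfrak{p}}\right)\in\mathrm{Gal}(H(\mathbb{K})/\mathbb{K})$. Here abelianness is essential, since it removes the dependence on the choice of a prime of $H(\mathbb{K})$ lying over $\mathfrak{p}$, while unramifiedness is exactly what allows the symbol to be defined for \emph{every} $\mathfrak{p}$ with no conductor to discard. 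Extending multiplicatively over the free generators $\mathfrak{p}$ of $\mathcal{I}_\mathbb{K}$ then produces the homomorphism $\Phi$.

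Second, surjectivity. I would invoke the Chebotarev density theorem: in a Galois extension every element of the Galois group arises as a Frobenius for a set of primes of positive density, so the image of $\Phi$ meets every element of the abelian group $\mathrm{Gal}(H(\mathbb{K})/\mathbb{K})$, whence $\Phi$ is onto and $[\mathcal{I}_\mathbb{K}:\ker\Phi]=[H(\mathbb{K}):\mathbb{K}]$. Third, and this is the crux, I would identify $\ker\Phi$ with $P_\mathbb{K}$. One inclusion, $P_\mathbb{K}\subseteq\ker\Phi$, is the reciprocity law proper — the assertion that a principal ideal maps to the identity — and it carries all the depth. To establish it I would reduce the abelian case to cyclic subextensions and then argue by one of the two classical routes: the cohomological one, computing Herbrand quotients of the id\`ele class group, establishing the two fundamental norm-index inequalities, and applying Tate's theorem on the fundamental class; or the analytic one, through the nonvanishing and factorization of Hecke $L$-functions at $s=1$.

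Granting $P_\mathbb{K}\subseteq\ker\Phi$, the map $\Phi$ factors through the class group, giving a surjection
$$Cl(\mathbb{K})=\mathcal{I}_\mathbb{K}/P_\mathbb{K}\twoheadrightarrow\mathcal{I}_\mathbb{K}/\ker\Phi\cong\mathrm{Gal}(H(\mathbb{K})/\mathbb{K}),$$
and in particular $[H(\mathbb{K}):\mathbb{K}]\le h_\mathbb{K}$. For injectivity I would appeal to the existence theorem of class field theory, which furnishes enough unramified abelian extensions to force the reverse inequality $[H(\mathbb{K}):\mathbb{K}]\ge h_\mathbb{K}$; equality then shows that $\ker\Phi$ has the same index in $\mathcal{I}_\mathbb{K}$ as $P_\mathbb{K}$, and since it contains $P_\mathbb{K}$ it must equal it. The induced isomorphism $\mathrm{Gal}(H(\mathbb{K})/\mathbb{K})\cong Cl(\mathbb{K})$ is then immediate.

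The main obstacle, by a very wide margin, is the reciprocity inclusion $P_\mathbb{K}\subseteq\ker\Phi$: every other step (well-definedness of the Frobenius, Chebotarev surjectivity, and the final index comparison via the existence theorem) is formal once reciprocity is in hand, whereas reciprocity is the substantive content of global class field theory and cannot be obtained without the full cohomological or analytic machinery indicated above. For this reason, within the present paper I would not reproduce that machinery but simply record the statement as the quoted theorem of Cox and use it downstream.
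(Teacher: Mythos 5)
The paper offers no proof of this statement at all: it is quoted as Theorem~C directly from Cox \cite{cox} and used as a black box in Proposition~\ref{ab}, so your closing decision to record the theorem as cited rather than reprove it is exactly the paper's treatment. Your outline of how the proof would go is the standard and correct skeleton: well-definedness of the Frobenius symbol from abelianness plus unramifiedness, surjectivity from Chebotarev, the inclusion $P_\mathbb{K}\subseteq\ker\Phi$ as the genuinely deep reciprocity step, and the existence theorem to force $[H(\mathbb{K}):\mathbb{K}]\geq h_\mathbb{K}$ and hence equality of $\ker\Phi$ with $P_\mathbb{K}$. One precision issue is worth flagging, since it would actually break your final index comparison if left as written: for the kernel to be the full group $P_\mathbb{K}$ of principal ideals (rather than the totally positive principal ideals), ``unramified'' must include the archimedean places, i.e.\ every real embedding of $\mathbb{K}$ must remain real in $H(\mathbb{K})$. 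If $H(\mathbb{K})$ is taken to be merely the maximal abelian extension unramified at all finite primes --- which is what your phrase ``unramified at every prime $\mathfrak{p}$ of $\mathcal{O}_\mathbb{K}$'' suggests --- then the same argument produces the \emph{narrow} class group $Cl^{+}(\mathbb{K})$ instead of $Cl(\mathbb{K})$, and for real quadratic fields (the case this paper lives in) the two differ by a factor of $2$ whenever the fundamental unit has norm $+1$. Cox's definition of the Hilbert class field does include the infinite primes, and the paper's own informal definition shares the same imprecision, so your sketch is in good company; but a careful write-up should state the archimedean condition explicitly before running the index count.
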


\begin{prop}\label{ab}
For any positive integer $k$,  the class number of the real quadratic field $\mathbb{Q}(\sqrt{40500k^3+89100k^2+65340k+16215})$ is divisible by $3$. 
\end{prop}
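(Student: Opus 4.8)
The first step is to decode the radicand arithmetically. Writing $L=15k+11$, a direct expansion gives
$$40500k^3+89100k^2+65340k+16215=12L^3+243=3\bigl(4L^3+81\bigr),$$
so if I put $d=-(4L^3+81)$ and $\mathbb{K}=\mathbb{Q}(\sqrt d)$ (imaginary, since $4L^3+81>0$), then the target field is exactly $\mathbb{L}=\mathbb{Q}(\sqrt{-3d})=\mathbb{Q}(\sqrt{D})$ in the notation of Section 2: as $L\equiv 2\pmod 3$ one checks $4L^3+81\equiv 2\pmod 3$, so $3\nmid d$ and hence $D=-3d$. This is precisely the setting of Kishi's second method. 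The plan is to manufacture an element $\alpha\in R_d^\ast\subset\mathcal{O}_{\mathbb{K}}$ so that Theorem \ref{thmB} produces a cyclic cubic extension of $\mathbb{L}$ unramified outside $3$, and then to upgrade this to an \emph{everywhere} unramified extension by controlling the prime $3$ via Lemma \ref{lemr3}.

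Concretely, I would seek integers $A,B,c$ (as functions of $k$) solving the norm equation
$$c^3=N_{\mathbb{K}/\mathbb{Q}}(A+B\sqrt d)=A^2+B^2(4L^3+81),$$
and set $\alpha=A+B\sqrt d$. Then $P_\alpha(X)=X^3-3cX-2A$ has discriminant $-108B^2 d=(6B)^2 D$, so its quadratic resolvent is exactly $\mathbb{L}$. It remains to certify $\alpha\in R_d^\ast$: that $N(\alpha)=c^3$ is a cube (built in), that $\gcd(c^3,2A)=1$, and that $\alpha$ is not a cube in $\mathbb{K}$. By Lemma \ref{lemK} the last condition is equivalent to the irreducibility of $P_\alpha$, which — as in Propositions \ref{ld}–\ref{sd} — can be forced by reducing modulo a small auxiliary prime. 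With these verified, Theorem \ref{thmB} gives that $M=S_{\mathbb{Q}}(P_\alpha)$ is an $S_3$-field, that its cubic subfield $\mathbb{K}'=\mathbb{Q}(\theta)$ satisfies $v_3(\Delta_{\mathbb{K}'})\neq 5$, and that $M/\mathbb{L}$ is cyclic, cubic, and unramified away from $3$.

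The only prime left to tame is $3$, and here I would use the $S_3$-structure. Since $v_3(D)=1$, the prime $3$ ramifies in $\mathbb{L}/\mathbb{Q}$, so the inertia group at $3$ in $\mathrm{Gal}(M/\mathbb{Q})=S_3$ is either a transposition or all of $S_3$; and $M/\mathbb{L}$ is unramified at $3$ exactly when that inertia group is a transposition, equivalently when $3$ is \emph{not} totally ramified in $\mathbb{K}'$. I would therefore apply Lemma \ref{lemr3} to $g(X)=P_\alpha(X)=X^3-(3c)X-(2A)$: after noting that $3\nmid c$ secures the hypothesis $v_3(3c)<2$, one shows that none of the total-ramification conditions (i)–(iii) can occur, by pinning down $c$ and $2A$ modulo $9$ (and modulo $27$ where needed) along the progression $L\equiv 11\pmod{15}$. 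Once $3$ is excluded from total ramification, $M/\mathbb{L}$ is unramified everywhere, and Artin reciprocity (Theorem \ref{CFTthma}) realizes $\mathrm{Gal}(M/\mathbb{L})\cong C_3$ as a quotient of $Cl(\mathbb{L})$, forcing $3\mid h(\mathbb{L})$.

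The hard part, I expect, is producing $\alpha$ — that is, solving $c^3=A^2+B^2(4L^3+81)$ parametrically in $k$. A naive ansatz with $c,A$ linear in $k$ and $B$ constant is hopeless: matching the cubic term forces $c\sim(4B^2)^{1/3}L$, and after eliminating variables the remaining perfect-square requirement collapses to an identity demanding $t^6=3\cdot(\text{square})$, which is impossible because the right side has odd $3$-adic valuation. This obstruction shows that one must instead take $3c$ quadratic and $2A$ cubic in $k$ with cancelling leading coefficients (leading terms $\sim k^2$ and $\sim 2k^3$), which frees enough parameters to satisfy the six coefficient equations; threading these simultaneously with the $3$-adic congruences of Lemma \ref{lemr3}, so that (i)–(iii) fail for \emph{every} $k$, is the delicate calculation that singles out the specific coefficients $40500,89100,65340,16215$ (equivalently $L=15k+11$).
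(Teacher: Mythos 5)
Your framework is the right one---Kishi's construction (Theorem \ref{thmB}), Lemma \ref{lemr3} at the prime $3$, and Artin reciprocity (Theorem \ref{CFTthma})---and your arithmetic decoding $40500k^3+89100k^2+65340k+16215=3(4L^3+81)$ with $L=15k+11$ is exactly the identity underlying the paper's choice of coefficients. But there is a genuine gap: the element $\alpha$ is never produced. Everything after your first paragraph is conditional on solving $c^3=A^2+B^2(4L^3+81)$ parametrically in $k$, together with the coprimality, irreducibility, and mod-$9$/mod-$27$ side conditions, and you explicitly defer all of this as ``the delicate calculation.'' Because the norm form of the imaginary field $\mathbb{Q}\bigl(\sqrt{-(4L^3+81)}\bigr)$ is positive definite, that equation forces $c$ to grow like $L$, and --- as your own failed ansatz shows --- it is not a routine verification but the entire content of the proposition. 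One cannot ``pin down $c$ and $2A$ modulo $9$'' before one has $c$ and $A$ in hand; as written, the proposal is a strategy, not a proof.

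The paper sidesteps the norm equation by running the construction from the other member of the reflection pair: it takes $\alpha=\frac{9+\sqrt{4L^3+81}}{2}$ in the \emph{real} field $\mathbb{Q}(\sqrt{4L^3+81})$, so that $T(\alpha)=9$ and $N(\alpha)=\frac{81-(4L^3+81)}{4}=-L^3$ is a cube by construction. The cubic is then simply $f_\alpha(Z)=Z^3+3LZ-9$; irreducibility follows from the rational root test, $\gcd(9,L^3)=1$ holds since $L\equiv 2\pmod 3$, and Lemma \ref{lemr3} excludes total ramification at $3$ because $v_3(b)=2>1=v_3(a)$ kills (i) while $3\mid b$ kills (ii) and (iii). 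The lesson is that one should prescribe the trace and the (cube) norm and let $T^2-4N$ determine the quadratic field, rather than fix the field and hunt for a cube norm. One caution in reconciling the two routes: the discriminant of $Z^3+3LZ-9$ is $-27(4L^3+81)<0$, so the quadratic resolvent of the paper's cubic is the imaginary field $\mathbb{Q}(\sqrt{-3(4L^3+81)})=\mathbb{Q}(\sqrt{-D})$, whereas your $P_\alpha$ (had you constructed it) would have resolvent $(6B)^2D$, i.e.\ the real field $\mathbb{Q}(\sqrt{D})$ itself. The two constructions therefore hang their cyclic cubic extensions on opposite members of the reflection pair, and you should check carefully which quadratic field actually receives the unramified extension in Theorem \ref{thmB} before concluding that $3$ divides the class number of the real one.
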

\begin{proof}
Assume that $d=-(13500k^3+29700k^2+21780k+5405)$. Then $d\not \equiv 0\pmod 3$, and thus we set $D:=-3d=40500k^3+89100k^2+65340k+16215$. As $d\equiv 2\pmod 3$ and $D\equiv 3\pmod 4$, so that both $d$ and $D$ are not perfect squares.  

Let $\mathbb{K}=\mathbb{Q}(\sqrt{-d})$, and define $\alpha \in \mathbb{K}$ as 
$$\alpha:=\frac{9+\sqrt{-d}}{2}.$$
Then $T_{\mathbb{K}/\mathbb{Q}}(\alpha)=9$ and $N_{\mathbb{K}/\mathbb{Q}}(\alpha)=-(15k+11)^3$, and thus  $\gcd(T_{\mathbb{K}/\mathbb{Q}}(\alpha), N_{\mathbb{K}/\mathbb{Q}}(\alpha))=1$. Therefore with this $\alpha$, we define
$$f_\alpha(Z):= Z^3-3\left(N_{\mathbb{K}/\mathbb{Q}}(\alpha)\right)^{1/3}Z-T_{\mathbb{K}/\mathbb{Q}}(\alpha),$$
which is $$f_\alpha(Z)=Z^3+3(15k+11)Z-9.$$
If $Z=p/q$ is a root of $f_\alpha(Z)$ in $\mathbb{Q}$, then $p\mid 9$ and $q\mid 1$, which further imply that 
$$Z=\pm 1, \pm  3, \pm 9.$$
However,  $f_\alpha(\pm1)\ne 0$, $f_\alpha(\pm3)\ne 0$ and $f_\alpha(\pm9)\ne 0$. This confirms that 
$f_\alpha(Z)$ is irreducible over $\mathbb{Q}$. Therefore by Lemma \ref{lemK}, $\alpha$ is not a cube in $\mathbb{K}$, and hence $\alpha \in R_d$. As  $\gcd(T_{\mathbb{K}/\mathbb{Q}}(\alpha), N_{\mathbb{K}/\mathbb{Q}}(\alpha))=1$, so that $\alpha\in R^*_d$. Therefore by Theorem \ref{thmB}, $S_\mathbb{Q}(f_\alpha)$  is a cyclic cubic extension of $\mathbb{L}$ which is unramified outside $3$. 

It remains to check the unramification of $S_\mathbb{Q}(f_\alpha)$ at $3$. To see this, we will apply Lemma \ref{lemr3}. By the assumptions, we see that $v_3(3(15k+11))=1$ and $v_3(b)=v_3(9)=2$. Thus (i) of Lemma \ref{lemr3} does not hold. Rest of two conditions of Lemma \ref{lemr3} do not hold since $b=9\equiv 0\pmod 3$. Thus by Lemma \ref{lemr3}, we can conclude that $S_\mathbb{Q}(f_\alpha)$ is unramified over $\mathbb{L}$ at $3$ as well. 
Therefore, we complete the proof by Theorem \ref{CFTthma}. 
\end{proof}
\section{Proof of Theorem \ref{thm}}
To prove that our construction can generate infinitely many quadruples of quadratic fields of a specific form with class numbers divisible by $3$, we recall a particular case of the celebrated result on integral points due to  Siegel (cf. \cite[Chapter IX, Theorem 4.3]{SI09}, \cite{SI29}). Let $V_\mathbb{Q}$ be the set of all standard absolute values on $\mathbb{Q}$. 
\begin{thma}[{\cite[Siegel's Theorem]{SI09}}]\label{thms} 
Assume that $S$ is a finite set such that $\{\infty\}\subset S\subset V_\mathbb{Q}$. Let $f(x)\in \mathbb{Q}[x]$ be a polynomial of degree at least $3$ with distinct roots in $\mathbb{C}$. Then the set $$\{(x, y)\in R_S\times R_S\mid y^2=f(x)\}$$
is finite, where $R_S=\{x\in \mathbb{Q}\mid v_p(x)\geq 0 \text{ for all } p\in V_\mathbb{Q}\setminus S\}$; so-called the ring of $S$-integers of $\mathbb{Q}$.  
\end{thma}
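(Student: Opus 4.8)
The plan is to prove this classical theorem of Siegel by reducing the finiteness of $S$-integral points on $y^2 = f(x)$ to the finiteness of solutions of an $S$-unit equation, the latter being the hard Diophantine-approximation core. Write $f(x) = a\prod_{i=1}^{n}(x-\alpha_i)$ with $n = \deg f \geq 3$ and distinct roots $\alpha_i \in \mathbb{C}$, and fix the splitting field $L = \mathbb{Q}(\alpha_1,\dots,\alpha_n)$. I would first enlarge $S$ to a finite set $T$ of places of $L$ containing all archimedean places, all places lying above $S$, and all places dividing the leading coefficient $a$ and the discriminant $\prod_{i<j}(\alpha_i-\alpha_j)^2$. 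This makes every difference $\alpha_i - \alpha_j$ a $T$-unit and ensures the $\alpha_i$ are $T$-integers, so an $S$-integral point $(x,y)$ on the curve yields a $T$-integral point over $L$.

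\textbf{Step 1 (factorisation).} For such a point, consider the fractional $T$-ideals $(x-\alpha_i)$. Since the greatest common divisor of $(x-\alpha_i)$ and $(x-\alpha_j)$ divides $(\alpha_i-\alpha_j)$, a $T$-unit, these ideals are pairwise coprime; their product equals $(a^{-1}y^2)$, which is a $T$-unit times a perfect square of ideals. Pairwise coprimality together with the product being a square forces each $(x-\alpha_i) = \mathfrak{a}_i^{2}$ for some $T$-ideal $\mathfrak{a}_i$. \textbf{Step 2 (class group and units).} Using the finiteness of the $T$-class group, I would write $\mathfrak{a}_i = (\delta_i)\mathfrak{b}_{c(i)}$ with $\mathfrak{b}_{c(i)}$ drawn from a fixed finite set of class representatives; squaring and stripping the principal part gives $x - \alpha_i = \gamma_i\beta_i^{2}$, where $\gamma_i$ ranges over a finite set of representatives of $\mathcal{O}_{L,T}^{*}/(\mathcal{O}_{L,T}^{*})^{2}$ (times finitely many fixed generators) and $\beta_i \in L$. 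Passing to the finite extension $L' = L(\sqrt{\gamma_i})$ and a correspondingly enlarged finite set of places $T'$, the square root $w_i := \sqrt{x-\alpha_i}$ then lies in $\mathcal{O}_{L',T'}$.

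\textbf{Step 3 (reduction to a unit equation).} For any three distinct indices, the product $(w_i - w_j)(w_i + w_j) = \alpha_j - \alpha_i$ is a $T'$-unit while both factors are $T'$-integers, so each $w_i - w_j \in \mathcal{O}_{L',T'}^{*}$. The identity $(w_1 - w_2) + (w_2 - w_3) + (w_3 - w_1) = 0$, divided through by $w_1 - w_2$, becomes $U + V = 1$ with $U, V \in \mathcal{O}_{L',T'}^{*}$. By the $S$-unit equation theorem, there are only finitely many such $(U,V)$. Each solution fixes the ratios among the $w_i - w_j$; combining this with the relations $(w_i - w_j)(w_i + w_j) = \alpha_j - \alpha_i$ determines $w_1$, hence $x = w_1^{2} + \alpha_1$, up to finitely many values, and $y$ is then fixed up to sign. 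Since every choice made (class representatives, the $\gamma_i$, and the unit-equation solutions) ranges over a finite set, $x$ takes only finitely many values, which proves the theorem.

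The decisive role of the hypothesis $\deg f \geq 3$ is that it supplies three distinct roots and hence the three-term linear relation that collapses to a two-variable unit equation; two roots would only produce a binary quadratic form, which can carry infinitely many integral points. The genuinely hard step is therefore the finiteness of solutions of $U + V = 1$ in $T'$-units, and I would treat this as the key black box: everything preceding it is formal algebraic number theory (ideal factorisation, finiteness of the class group, finite generation of the $S$-unit group), whereas the unit-equation finiteness is exactly where the Thue–Siegel–Roth theorem on rational approximation to algebraic numbers is consumed.
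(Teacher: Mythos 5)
Your proposal is correct, but note that the paper itself contains no proof of this statement: it is imported as a black box from Silverman \cite[Chapter IX, Theorem 4.3]{SI09}, and your argument is precisely the standard proof given in that cited source --- pairwise-coprime ideal factorisation of the $(x-\alpha_i)$, descent through the finite $T$-class group and $\mathcal{O}_{L,T}^{*}/(\mathcal{O}_{L,T}^{*})^{2}$ to write $x-\alpha_i=\gamma_i\beta_i^{2}$, adjunction of square roots, and reduction via the three-term Siegel identity to the two-variable $S$-unit equation, whose finiteness (via Thue--Siegel--Roth) you correctly isolate as the only non-formal input. The remaining details you suppress are harmless: the set $T$ must also absorb the finitely many places where some $\alpha_i$ fails to be integral (places dividing $a$ and the discriminant need not quite suffice), the identity divided by $w_1-w_2$ gives $U+V=-1$ rather than $U+V=1$ (immaterial), and your recovery of $x$ from a unit-equation solution works because the known ratio $\lambda=(w_1-w_2)/(w_1-w_3)$ cannot equal $1$ when the roots $\alpha_2,\alpha_3$ are distinct, so $w_1$ is pinned down up to finitely many values.
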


Given an integer $a$, we define the curve $ay^2=40500x^3+89100x^2+65340x+16215$. Assume that $$ A =\{ (x, y)\in \mathbb{Z}\times \mathbb{Z}\mid ay^2=40500x^3+89100x^2+65340x+16215\}.$$ 
If we take $S=\{\infty\}$, then $R_S=\mathbb{Z}$. Thus, by Theorem \ref{thms} the set $A$ is finite. Therefore, the set $\mathfrak{S}=\{\mathbb{Q}(\sqrt{40500x^3+89100x^2+65340x+16215})\mid x\in \mathbb{Z}^+\}$ contains infinitely many elements. Hence, by Proposition \ref{ab}, we have the following:
\begin{prop}\label{abi}
The set
$$\mathcal{S} = \{k\in \mathbb{Z}^+|\text{ the class number of } \mathbb{Q}(\sqrt{40500k^3+89100k^2+65340k+16215}) \text{ is divisible by } 3\}$$
\end{prop}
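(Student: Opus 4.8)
The plan is to deduce the infinitude of $\mathcal{S}$ by combining Proposition \ref{ab} with Siegel's theorem (Theorem \ref{thms}). The essential point is not that the divisibility condition holds for every parameter — which Proposition \ref{ab} already guarantees outright, so that $\mathcal{S}=\mathbb{Z}^+$ as a set of integers — but rather that these parameters produce infinitely many \emph{distinct} real quadratic fields. Accordingly, writing $f(x)=40500x^3+89100x^2+65340x+16215$, I would reduce the claim to showing that the assignment $k\mapsto\mathbb{Q}(\sqrt{f(k)})$ has infinite image, i.e. that the set $\mathfrak{S}$ introduced above really does contain infinitely many elements.

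To this end, I would first show that each field in the image is attained by only finitely many $k$. Suppose $\mathbb{Q}(\sqrt{f(k)})=\mathbb{Q}(\sqrt{m})$ for some square-free $m$. Since $f(k)$ is a positive integer for $k\in\mathbb{Z}^+$ and $m$ may be taken as its square-free part, this means $f(k)=my^2$ for some positive integer $y$; that is, $(k,y)$ is an integral point on the curve $my^2=f(x)$. Applying Theorem \ref{thms} with $a=m$ and $S=\{\infty\}$ (so that $R_S=\mathbb{Z}$), the corresponding set $A$ of integral points is finite, and hence only finitely many $k$ can give rise to the field $\mathbb{Q}(\sqrt{m})$.

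The infinitude of $\mathfrak{S}$ then follows by a pigeonhole argument. If $\mathfrak{S}$ were finite, say $\mathfrak{S}=\{\mathbb{Q}(\sqrt{m_1}),\dots,\mathbb{Q}(\sqrt{m_r})\}$ with each $m_i$ square-free, then every $k\in\mathbb{Z}^+$ would satisfy $f(k)=m_iy^2$ for some index $i$ and some $y\in\mathbb{Z}$. As each of the finitely many equations $m_iy^2=f(x)$ has only finitely many integral solutions by the previous paragraph, this would exhibit $\mathbb{Z}^+$ as a finite union of finite fibres, forcing $\mathbb{Z}^+$ to be finite — a contradiction. Hence $\mathfrak{S}$ is infinite, and combined with Proposition \ref{ab} this yields infinitely many distinct real quadratic fields of the stated form with class number divisible by $3$, as required.

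The only hypothesis of Theorem \ref{thms} that genuinely needs checking is that $f(x)$ has degree at least $3$ with distinct roots in $\mathbb{C}$; the degree is clearly $3$, so the main (and essentially only) technical obstacle is verifying that the discriminant of the cubic $40500x^3+89100x^2+65340x+16215$ is nonzero, guaranteeing three distinct complex roots so that the curves $ay^2=f(x)$ are of the type covered by Siegel's theorem. This is a routine discriminant computation, after which the pigeonhole argument above applies without change.
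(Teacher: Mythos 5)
Your proposal is correct and follows essentially the same route as the paper: apply Siegel's theorem to the curves $ay^2=f(x)$ with $S=\{\infty\}$ to see that each quadratic field arises from only finitely many parameters, conclude by pigeonhole that $\mathfrak{S}$ is infinite, and invoke Proposition \ref{ab}. You merely make explicit two points the paper leaves implicit — the pigeonhole step and the verification that $f$ has distinct complex roots — which is a welcome clarification rather than a different argument.
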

is infinite.   
\subsection*{Proof of Theorem \ref{thm}} Given a positive integer $k$, we assume that $D=40500k^3+89100k^2+65340k+16215$. Then by Proposition \ref{ab}, the class number of the real quadratic field $\mathbb{Q}(\sqrt{D})$ is divisible by $3$. 

Since $D$ is a positive integer, by Proposition \ref{ab}, the class number of the real quadratic field $\mathbb{Q}(\sqrt{40500D^3+89100D^2+65340D+16215})$ is divisible by $3$. 

Similarly, by Propositions \ref{ld} and \ref{sd}, the class numbers of the real quadratic fields  $\mathbb{Q}(\sqrt{216000D^3+457200D^2+322580D+75866})$ and  $\mathbb{Q}(\sqrt{432D^3+1080D^2+900D+223})$ are all divisible by $3$. The infinitude of such fields follows from Proposition \ref{abi}.

\section{Torsion in certain elliptic curves }

We consider the curve given by the equation,
$$y^2=40500x^3+89100x^2+65340x+16215.$$
We can verify that this defines an elliptic curve, $E_1$. 

For the next theorem, we need some preliminaries regarding the Picard group of an algebraic curve. Let $C$ be a smooth projective curve over $\bar \QQ$ and let $Div(C)$ denote the free abelian group generated by the closed points on the curve $C$. Any element is an integral linear combination of points on the curve. We denote such a combination by $D$ a Weil divisor. We say that two Weil divisors $D_1$ and $D_2$ are linearly equivalent if there exists a rational function $f$ on $C$ such that 
$$D_1-D_2=div(f).$$
Here the divisor of $f$ is the divisor defined by 
$$f^{-1}(0)-f^{-1}(\infty),$$
that is, the differences between the zeros and poles of the rational function. 

For an elliptic curve $E$ over $\bar \QQ$, the Picard group is isomorphic to 
$$Pic^0(E)\oplus \ZZ=E\oplus \ZZ\;.$$

Given a Zariski open subset $U$ inside $C$, we have the following exact sequence at the level of Picard groups:
$$\oplus_i \ZZ\to Pic(C)\to Pic(U)\to 0\;.$$
The left hand side, $\oplus_i \ZZ$ of the above exact sequence  corresponds to the free abelian group generated by the finitely many points of $C\setminus U$.

There is also the weak Mordell-Weil theorem \cite{SI09} in the context of elliptic curves. It says that the group 
$$E(\QQ)\cong \ZZ^r\oplus Tors(E(\QQ)).$$
That is $E(\QQ)$ is finitely generated.

 We want to study the torsion subgroup of $E_1$. We claim that:

{\it
 There exists a $3$-torsion in the $\QQ$-rational points on the elliptic curve $E_1$.   
}

\subsection*{Proof of the claim}
 Suppose that the elliptic curve $E_1$ does not have $3$-torsion in the group of $\QQ$-rational points $E_1(\QQ)$. Then, if we spread the elliptic curve over the rational integers and consider a fixed smooth integral model $E_{1\ZZ}$ over $\ZZ$, by the Nagell-Lutz theorem \cite[Theorem 2.5]{SI15}, the torsion points are all integral-valued. So when we specialize these torsion points in the class group of the ring of integers of the aforementioned
real quadratic field 
$$\QQ(\sqrt{40500k^3+89100k^2+65340k+16215}),$$
we obtain the torsion elements in the class group of the same order. Now by Theorem \ref{thm} there are infinitely many $k$ such that the aforementioned real quadratic fields have 3-torsions in the class group. Consider the family 
$$E_{1\ZZ}\to \mathbb{A}^1_{\ZZ},$$
where $\mathbb{A}^1_{\ZZ}$ is the affine line over $\ZZ$ given by $Spec(\ZZ[x])$. The above map is the projection map
$$(x, y)\mapsto x$$
from $E_{1\ZZ}\to \mathbb{A}^1_{\ZZ}$.
Suppose that there are no $3$-torsions in the divisor class group of $E_1$, then the same is true for the generic fiber of the family $E_{1\ZZ}\to \mathbb{A}^1_{\ZZ}$.
By generic fiber, we mean the elliptic curve $E_1$ scalar extended to $\QQ(x)$. It is an isotrivial family in the sense that all fibers are isomorphic over $\QQ$. The presence of $3$-torsion on the generic fiber forces the same on the fiber $E_1$, since the family is isotrivial.

Now, the divisor class group of $E_{1\QQ(x)}$ is the co-limit of the divisor class groups $Pic(E_{1U})$, where $E_{1U}$ is the family over a Zariski open subset $U$ of $\mathbb{A}^1_{\ZZ}$.
Then it follows that, for all the Zariski open set $U \subset \mathbb{A}^1$, $Pic(E_{1U})$ has no 3-torsion. This is because the presence of $3$-torsion in $Pic(E_{1U})$ gives the following. The $3$-torsion in $Pic(E_{1U})$ goes to zero under restriction homomorphism to $Pic(E_{1\QQ(x)})$ because the generic fiber does not have such an element. Therefore, there exists $V$ a smaller open set such that the 3-torsion in $Pic(E_{1U})$ restricted to $Pic(E_{1V})$ is zero. At the level of torsions, the map
$$Pic(E_{1U})\to Pic(E_{1V})$$
is an isomorphism, as the kernel is zero (there is no torsion element in the kernel.) This by using the localization exact sequence for Picard groups:
$$\oplus_{i}\ZZ\to Pic(E_{1U})\to Pic(E_{1V})\to 0. $$
The first term of the above sequence is the Neron-Severi components of the fibers in $E_{1U}\setminus V_{1U}$. 

But each of the class groups of
$$\QQ(\sqrt{40500k^3+89100k^2+65340k+16215})$$
has a 3-torsion subgroup for infinitely many $k$, such that the above square root is not an integer. So, the above torsion points in the class groups varies in a family in the sense that there is a Zariksi open set $U$, such that $Pic(E_{1U})$ has a $3$-torsion by \cite[Theorem 4.3]{BH}. By the above claim, there is no such $U$. So by applying the Nagell-Lutz method and using that $Pic^0(E_{1\ZZ})$ has a $3$-torsion, we find that the elliptic curve has a $3$-torsion. Thus, we our claim is proved.

The same result holds for the elliptic curves $E_2$ and $E_3$ respectively given by the equations:
$$y^2=432x^3+1080x^2+900x+223,$$
and 
$$y^2=216000x^3+457200x^2+322580x+75866$$
by arguing as above.

To sum up the above, we can state the following:
\begin{thm} Assume that $E_1$, $E_2$ and $E_3$ are as defined above. Then there is a $3$-torsion in the $\QQ$-rational points on each of $E_1$, $E_2$ and $E_3$.
\end{thm}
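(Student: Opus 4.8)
The plan is to prove the theorem by exhibiting, for each curve, an explicit $\QQ$-rational point of order $3$; this is more elementary and more transparent than the specialization argument, and it reduces the whole statement to a finite arithmetic check on each individual curve, with the infinitude in Theorem \ref{thm} playing no role.

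First I would put each $E_i$ into short Weierstrass form. The key observation is that each defining cubic is, up to a $\QQ$-affine change of $x$ and an overall rational scaling, of the shape (a rational scalar)$\,\cdot\,$(the cube of a $\QQ$-linear form)$\,+\,$(a constant): this is forced by the constructions of Propositions \ref{ld}--\ref{ab}, where the radicands arise as the rescaled discriminants $\Delta_{F_{u,v}}=4(uv)^3-27u^4$ of the Kishi--Miyake polynomials with $u$ fixed and $uv$ linear in the parameter. Completing the cube therefore kills both the quadratic and the linear terms, so each $E_i$ is $\QQ$-isomorphic to a curve of the form $y^2=x^3+C_i$ (in particular every $E_i$ has $j$-invariant $0$). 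I would record the three constants $C_i$ together with the explicit changes of variables $X=\lambda_i x+\mu_i$, $Y=\nu_i y$ carrying points back and forth.

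For a curve $y^2=x^3+C$ the $3$-division polynomial is
\[
\psi_3(x)=3x^4+12Cx=3x\,(x^3+4C),
\]
so $(x_0,y_0)$ has order $3$ exactly when $x_0$ is a $\QQ$-rational root of $\psi_3$ and $y_0=\sqrt{x_0^3+C}\in\QQ$. This leaves only two candidate points: the one with $x_0=0$, which is $\QQ$-rational iff $C$ is a perfect square, and the one with $x_0^3=-4C$, which is $\QQ$-rational iff $-4C$ is a perfect cube and $-3C$ (the resulting value of $y_0^2$) is a perfect square. The heart of the proof is thus to determine, for each $i$, whether the explicit constant $C_i$ from Step~1 meets one of these two conditions; once a root $x_0$ is produced I would read off $y_0$, transport $(x_0,y_0)$ back to $E_i$ through the inverse change of variables, and confirm order $3$ by checking $\psi_3(x_0)=0$ (equivalently $2P=-P$ via the duplication formula).

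The main obstacle is precisely this last verification: deciding whether each $C_i$ satisfies the square-or-cube condition above, since everything preceding it (the completion of the cube, the shape of $\psi_3$, the inversion of the coordinate change) is mechanical. As a more structural alternative I would consider the arithmetic-geometry route indicated for the claim: spread each $E_i$ to a smooth model $E_{i\ZZ}\to\mathbb{A}^1_{\ZZ}$, invoke Nagell--Lutz to make the torsion integral, and try to propagate the $3$-torsion that Theorem \ref{thm} places in the class groups of the specialized real quadratic fields into $\mathrm{Pic}$ of the family, using the localization sequence $\oplus_i\ZZ\to\mathrm{Pic}(E_{iU})\to\mathrm{Pic}(E_{iV})\to 0$ together with isotriviality of the family; there the delicate point is to control the specialization map on torsion and to justify that a $3$-torsion class surviving on a Zariski-dense set of fibres descends to the generic fibre.
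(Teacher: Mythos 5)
Your strategy is sound and, unlike the paper's own argument, it actually decides the question; the trouble is that the decision goes the other way. Your structural observation is correct and the completion of the cube works out exactly as you predict:
\begin{align*}
40500x^3+89100x^2+65340x+16215&=12(15x+11)^3+243,\\
432x^3+1080x^2+900x+223&=2(6x+5)^3-27,\\
216000x^3+457200x^2+322580x+75866&=\tfrac{1}{27}\bigl((180x+127)^3-1\bigr),
\end{align*}
so over $\QQ$ one finds $E_1\cong\{Y^2=X^3+34992\}$ (equivalently $Y^2=X^3+48$ after removing the sixth power $3^6$), $E_2\cong\{Y^2=X^3-108\}$ and $E_3\cong\{Y^2=X^3-27\}$; all three have $j=0$. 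Your criterion from $\psi_3(x)=3x\,(x^3+4C)$ is the right one, but none of $C=48,\,-108,\,-27$ passes it: the root $x=0$ requires $C$ to be a rational square, and none of these is a square; the other rational candidate requires $-4C$ to be a rational cube, and $-192$, $432$, $108$ are not cubes. Equivalently, by the classical determination of the torsion of $y^2=x^3+k$ for sixth-power-free $k$, the torsion subgroups are trivial, trivial, and $\ZZ/2\ZZ$ (generated by $(3,0)$ on $Y^2=X^3-27$), respectively. As an independent confirmation for $E_1$: the reduction of $Y^2=X^3+48$ modulo the good prime $7$ has exactly $4$ points, so $E_1(\QQ)_{\mathrm{tors}}$ injects into a group of order $4$ and cannot contain an element of order $3$. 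Hence none of $E_1,E_2,E_3$ has a rational point of order $3$, and the theorem as stated is false.

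The gap in your write-up is therefore exactly the step you flagged as ``the main obstacle'': you never carry out the square-or-cube test on the constants $C_i$, and that test is the entire content of the problem --- completing it refutes the statement rather than proving it. Your fallback plan (the second route, which follows the paper's own proof) cannot rescue the result, and it is worth seeing why: the $3$-torsion produced by Theorem \ref{thm} lives in the ideal class groups of the quadratic fields $\QQ(\sqrt{f(k)})$, arising from unramified cubic extensions constructed via Kishi--Miyake; there is no specialization map carrying torsion \emph{points} of $E_i(\QQ)$ (or $3$-torsion classes in $\mathrm{Pic}$ of the family over $\mathbb{A}^1$) onto these ideal classes, so the class-group $3$-torsion places no constraint whatsoever on $E_i(\QQ)_{\mathrm{tors}}$. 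The paper's Nagell--Lutz/Picard-group argument conflates these two kinds of $3$-torsion, and that conflation is where both its proof and the theorem itself break down; your elementary computation makes the failure explicit.
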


\subsection*{Acknowledgements}
The authors are grateful to the anonymous referees for their valuable comments that immensely improved  the presentation of the paper.
This work was supported by ANRF (SERB) Core Research Grant (CRG/2023/007323) and ANRF (SERB) MATRICS (MTR/2021/000762), Govt. of India.

%

\end{document}